\newcommand{\mytag}[2]{%
  \text{#1}%
  \@bsphack
  \begingroup
    \@onelevel@sanitize\@currentlabelname
    \edef\@currentlabelname{%
      \expandafter\strip@period\@currentlabelname\relax.\relax\@@@%
    }%
    \protected@write\@auxout{}{%
      \string\newlabel{#2}{%
        {#1}%
        {\thepage}%
        {\@currentlabelname}%
        {\@currentHref}{}%
      }%
    }%
  \endgroup
  \@esphack
}
\numberwithin{equation}{section}
\newlist{mathlist}{enumerate}{5}
\setlist[mathlist]{label={\textup{(\roman*)}}}
\theoremstyle{plain}
\newtheorem{thm}{Theorem}[section]
\newtheorem{prop}[thm]{Proposition}
\newtheorem{lem}[thm]{Lemma}
\theoremstyle{definition}
\newtheorem{hypo}{Assumption}
\newtheorem{defn}[thm]{Definition}
\theoremstyle{remark}
\newtheorem{rem}[thm]{Remark}
\newcommand{\oset}[3][0.1ex]{%
  \mathrel{\mathop{#3}\limits^{
    \vbox to#1{\kern-2\ex@
    \hbox{$\scriptstyle#2$}\vss}}}}
\renewcommand{\epsilon}{\varepsilon}
\renewcommand{\rho}{\varrho}
\newcommand{\1}{\mathds{1}}
\newcommand{\ie}{\textit{i.e.}\ }
\newcommand{\eg}{\textit{e.g.}\ }
\newcommand{\anc}[1]{\overset\leftarrow{#1}}
\DeclareMathOperator{\R}{\mathbb{R}}
\DeclareMathOperator{\e}{\mathrm{e}}
\DeclareMathOperator{\E}{\mathbb{E}}
\DeclareMathOperator{\N}{\mathbb{N}}
\title{\large{\textbf{MOMENTS OF DENSITY-DEPENDENT BRANCHING PROCESSES AND THEIR GENEALOGY}}}
\author[1,2,3]{Mathilde \textsc{André}} 
\author[4]{Félix \textsc{Foutel-Rodier}}
\author[2]{Emmanuel \textsc{Schertzer}}
\affil[1]{Institut de Biologie de l'ENS (IBENS), École Normale Supérieure, PSL Université, CNRS UMR 8197,
INSERM U1024, Paris, France}
\affil[2]{Faculty of Mathematics, University of Vienna, Oskar-Morgenstern-Platz 1, 1090 Wien, Austria}
\affil[3]{Marie et Louis Pasteur University, CNRS, LmB (UMR 6623), F-25000 Besançon, France}
\affil[4]{Université Paris Cité, CNRS, MAP5, F-75006 Paris, France}
\begin{document}

\maketitle

\begin{abstract}
A density-dependent branching process is a particle system in which
individuals reproduce independently, but in a way that depends on the
current population size. This feature can model a wide range of
ecological interactions at the cost of breaking the branching property.
We propose a general approach for studying the genealogy of these models
based on moments. Building on a recent work of Bansaye, we show how to
compute recursively these moments in a similar spirit to the many-to-few
formula in the theory of branching processes. These formulas enable one
to deduce the convergence of the genealogy by studying the population
density, for which stochastic calculus techniques are available. As a
first application of these ideas, we consider a density-dependent
branching process started close to a stable equilibrium of the ecological
dynamics. We show that, under a finite second moment assumption, its
genealogy converges to Kingman's coalescent when the carrying capacity of
the population goes to infinity.
\end{abstract}

\setcounter{tocdepth}{2}\tableofcontents
\section{Introduction}

\subsection{Motivation}

A genealogy is a forest structure that represents the degree of kinship
between individuals in a population. It is a central notion in population
genetics, in particular because genealogies are instrumental in studying
patterns of neutral genetic diversity \citep{wakeley_coalescent_2009,
tellier2014coalescence}. The mathematical theory of genealogies has been
mostly developed around the notion of coalescents, following the
seminal work of \cite{Kingman:1982aa} and its
subsequent generalizations \citep{Pitman:1999aa,
sagitov1999general, donnelly1999particle, schweinsberg2001coalescents}. 
A coalescent is a stochastic process obtained by sampling a finite number
of individuals at some reference generation, and following backward in
time their ancestral lineages until they merge (coalesce). This procedure
constructs a random forest which is usually encoded as a stochastic process with
values in the partitions.

Since their introduction, coalescents have been shown to describe the
limit of the genealogy of a wide variety of population models 
\citep{mohle_1998, mohle1999, Mohle_Sagitov, kaj_coalescent_2003,
schweinsberg2003coalescent}. However, a feature shared by almost all of
these models is that population size is fixed or, if individuals can be
of different types, that the number of individuals of each type is given. The
archetypical example of such models is the Wright--Fisher model, and its
generalizations to multiple types or to Cannings models
\citep{cannings1974latent, Etheridge:2011vm}. Despite their importance,
these processes do not model appropriately ecological interactions as
well as many important features of biological populations. For instance,
they can hardly model individuals spread across a spatial continuum or
with a continuous age structure. Conversely, density-dependent branching processes
is another well-studied class of models which seems suited to model such
phenomena. In these models, individuals reproduce independently, but in a
way that depends on the state of the population and which can model biological
interactions \citep{Kurtz:1978aa, Fournier2004,
champagnat2006microscopic, meleard-tran, etheridge2023looking}. The
logistic branching process \citep{Lambert2005} or the Bolker--Paccala
model \citep{bolker1997using} are typical examples of density-dependent
branching processes. This provides motivation for extending our knowledge
of genealogies to these processes, beyond models with fixed population
structure.

Unfortunately, the standard approach to proving convergence towards a
coalescent relies heavily on the property that the population structure
is fixed. This constraint entails that one can define a coalescent from
the discrete model as a Markov process. One can then rely on the numerous
techniques available for Markov processes to study it. The main objective of
our work is to propose some new ideas that can circumvent this issue. As
we will explain in more details below, instead of following ancestral
lineages backward in time, we will decompose forward in time the
genealogy at its last branch point. This approach is very close in spirit
to the spinal methods which have been used extensively in branching
process theory \citep{Lyons_Pemantle_Peres, Bansaye2011, Harris_Roberts}
and builds on the recent work of \cite{Bansaye} in that direction.

As a first application of this approach, we study the genealogy of a
general class of density-dependent branching processes with no types. 
Doing so, the main focus is to introduce the
methodology, in order to extend it to
structured population models in subsequent works. 
We show that, if the population starts close to a stable equilibrium of the
ecological dynamics and if the reproduction law has finite variance, the
genealogy of the population converges to Kingman's coalescent at large
scales.

\subsection{Model and main result}\label{sec:model}

\subsubsection*{Model} 
Fix some scaling parameter $K > 0$. The parameter $K$
corresponds to the carrying capacity of the model, which gives the
typical number of individuals in the population. For each $z > 0$,
consider some $q(z) > 0$ and some random variable $L(z)$ in
$\{0, 1, 2, \ldots\}$. They correspond respectively to the branching rate
and to the number of offspring of an individual when the population
density (that is, the size of the population divided by the carrying
capacity) is $z$. We start the population from $Z_0$ particles at time
$t=0$. Then, if the current population size is $n$, each individual dies
independently at rate $q(n/K)$, upon which it is replaced by an
independent number of offspring distributed as $L(n/K)$. This constructs
our model iteratively, until a potential explosion time.

More formally, we envision each individual as the vertex of a graph and,
attaching parents to offspring, we construct the population as a random 
 forest in which each vertex is further endowed with a birth and a death time.
We index individuals as elements of 
 $\mathcal{U}=\bigcup_{n\in\N}\N^n$, the set of Ulam--Harris labels, with the usual
interpretation that $ui$ is the $i$-th child of individual $u \in
\mathcal{U}$. Note that, since our population is a forest rather than a
tree, it does not start from a unique root vertex $\varnothing$ but from
a finite number $Z_0$ of roots labeled $\{1,\ldots, Z_0\}$ that form the
first generation. The procedure described above endows each individual
$u$ with a birth time $\lambda_u$ and a death time $\theta_u$. (With the
convention that $\lambda_i = 0$ for each root $i \in \{1,\ldots, Z_0\}$.)
We will denote by 
\[
    \mathcal{N}_t\ =\ \{ u \in \mathcal{U} : \lambda_u \leqslant t < \theta_u \}
\]
the set of individuals alive at time $t$, and by $Z_t =
|\mathcal{N}_t|$ the population size at that time. Finally, for two
individuals $u, v \in \mathcal{U}$, we let $u \wedge v$ denote their
most-recent common ancestor. The genealogical distance between two
individuals at time $t$ can then be defined as
\begin{equation}\label{eq:gen_distance}
    \forall u, v \in \mathcal{N}_t,\quad D_t(u,v)\ =\ t - \theta_{u \wedge
    v}.
\end{equation}
Note that $D_t(u,v)$ also corresponds to the coalescence time between $u$
and $v$.
\subsubsection*{Assumptions}

Let $m(z) = \E\big[L(z)\big]$ be the mean
number of offspring of an individual at population density $z$. When $m$
is finite, a result of \cite{Kurtz:1978aa} (see also
\citet[Chapter~11]{ethier_1986}) shows that the population density $(Z_t
/ K)_{t \geqslant 0}$ converges as $K \to \infty$ to a deterministic limit
$(z_t)_{t \geqslant 0}$ which is the solution to the nonlinear differential
equation 
\[
    \dot{z}_t \ =\ z_t q(z_t) (m(z_t)-1).
\]
\cite{Kurtz:1978aa} also obtained that under an additional second moment
assumption at stable equilibrium, the deviations converge weakly to an
Ornstein--Uhlenbeck process with gaussian stationary distribution.

Our first assumption ensures that the limiting dynamics has a stable
equilibrium, which we set to $1$ without loss of generality.

\begin{hypo}[Stable equilibrium]\label{hyp:equilibrium}
    The mean number of offspring $m(z) \coloneqq \E[L(z)]$ is finite for any $z >
    0$, and $m(1) = 1$. Moreover, $q$ is continuous at $z=1$ and $m$ is
    continuously differentiable at $z=1$ with $m'(1) < 0$.
\end{hypo}

Kingman's coalescent corresponds to the scaling limit of the genealogy of
populations in which the variance of the offspring size is finite. 
If this is relaxed, an individual can have a number of offspring of the
same order as the population size, leading to more general
$\Lambda$-coalescents \citep{schweinsberg2003coalescent, Mohle_Sagitov}.
Accordingly, our second assumption requires that the number of offspring
close to the equilibrium has a finite second moment.

\begin{hypo}[Finite second moment]\label{hyp:moments} 
    The map $z \longmapsto m_2(z) \coloneqq \E[L(z)(L(z)-1)]$ is
    continuous at $z = 1$. Moreover, there exists $\varepsilon > 0$ such
    that 
    \begin{equation} \label{eq:UI_moment}
        \adjustlimits\limsup_{A \to \infty}\sup_{z \in (1-\varepsilon, 1+\varepsilon)} 
        \E[L(z)^2 \1_{\{ L(z) > A\}}] = 0.
    \end{equation}
\end{hypo}

Note that the uniform integrability condition \eqref{eq:UI_moment} is
satisfied if $\sup_{z \in (1-\varepsilon, 1+\varepsilon)} \E[L(z)^{2+\eta}] <
\infty$ for some $\eta > 0$.

\subsubsection*{Main result} Our main result is that the genealogy of the
population, when started close to the stable equilibrium, converges to
Kingman's coalescent. It can be formulated in several ways, and we choose
to use the standard formalism of coalescent theory. In this
direction, fix $t > 0$ and let $(U_1,\ldots, U_k)$ be $k\geqslant 1$ individuals
sampled uniformly without replacement from the population at time $Kt$.
(Under our assumptions there will be more than $k$ individuals in the
population with high probability.) Let us define a process
$(\Pi^{k,K}_\tau)_{0 \leqslant \tau \leqslant t}$ with values in the partitions of
$\{1,\ldots, k\}$ by
\[
    i \sim_{\Pi^{k,K}_\tau} j \iff D_{Kt}(U_i, U_j) \leqslant \tau K.
\]
This process encodes the subforest spanned by $(U_i)_{i \leqslant k}$ as a
coalescent. Let us also recall that the $k$-Kingman coalescent with
rate $r > 0$ is defined as the stochastic process $(\Pi^{k}_\tau)_{\tau
\leqslant t}$ with values in the partitions of $\{1,\ldots,k\}$ such that
$\Pi^{k}_0$ is the partition of $\{1,\ldots,k\}$ into singletons, and
such that any two blocks merge at rate $r$.

\begin{thm}[Convergence of the genealogies]\label{thm:main}
    Suppose that Assumption~\ref{hyp:equilibrium} and Assumption~\ref{hyp:moments}
    hold and let $t > 0$, $k\geqslant 1$.
    \begin{enumerate}
    \item[(i)] The following holds in probability
        \[
            \adjustlimits\lim_{K \to \infty} \sup_{0 \leqslant s \leqslant Kt}\ 
            \Big| \frac{Z_s}{K} - 1 \Big|\ =\ 0.
        \]
    \item[(ii)] Conditional on the event $\{Z_{Kt}\geqslant k\}$, let
        $(\Pi^{k,K}_\tau)_{\tau \leqslant t}$ be the coalescent obtained from
        a uniform sample of size $k$ at time $Kt$. Then
    \[
        \lim_{K \to \infty}\ (\Pi^{k,K}_\tau)_{\tau \leqslant t}
       \ =\ (\Pi^{k}_\tau)_{\tau \leqslant t}
    \]
    in distribution for Skorohod's topology, where $(\Pi^k_\tau)_{\tau 
    \geqslant 0}$ is a $k$-Kingman coalescent with rate $q(1)m_2(1)$. 
    \end{enumerate}
\end{thm}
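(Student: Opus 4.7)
This is a long-time law of large numbers on the growing window $[0, KT]$, for which Kurtz's classical theorem only yields convergence on compact intervals. My plan is to first apply Kurtz on some fixed interval $[0, T_0]$ to bring $Z^K_t$ within any prescribed neighborhood of the equilibrium $1$, using the exponential attractiveness of $1$ under the limiting ODE (which follows from $q(1) > 0$ and $m'(1) < 0$ in Assumption~\ref{hyp:equilibrium}). From $T_0$ onward, I would write $Z^K$ as a compensator plus martingale, linearize the drift around $1$, and exploit the resulting exponential contraction together with martingale noise of size $O(K^{-1/2})$. Applying Doob's inequality on successive windows of length $O(1)$ and a union bound over the $O(KT)$ such windows then yields $\sup_{T_0 \leq s \leq KT} \abs{Z^K_s - 1} = O(\sqrt{\log K / K})$ in probability.

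\textbf{Heuristic for (ii).} The driving identity is a many-to-few formula in the spirit of Bansaye. Each reproduction event in which a parent of a population of size $n$ produces $L$ offspring creates $\binom{L}{2}$ sibling pairs, so each specific pair of current lineages coalesces at rate $q(n/K)\, m_2(n/K)/(n-1)$. By part (i) this rate is $\sim q(1) m_2(1)/K$ throughout $[0, KT]$, and after the time rescaling $\tau = s/K$ each pair of ancestral lineages of the sample coalesces at rate converging to $q(1) m_2(1)$, which is the claimed Kingman rate.

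\textbf{Rigorous argument.} To handle general $k$, I would use the many-to-few formulas developed in the paper to compute, for each partition $\pi$ of $\{1,\dots,k\}$ and each admissible coalescence profile, the expected number of $k$-tuples of distinct individuals at time $KT$ whose induced genealogical partition at time $K(T-\tau)$ equals $\pi$. Normalising by the total number of distinct $k$-tuples $Z_{KT}(Z_{KT}-1)\cdots(Z_{KT}-k+1) \sim K^k$ (again by (i)) converts this expectation into a probability. Expanding over all possible sequences of coalescence events, each binary merger at rescaled time $s$ contributes a factor $q(Z^K_{K(T-s)})\, m_2(Z^K_{K(T-s)})/K$, while any $j$-fold merger with $j \geq 3$ contributes a factor of the form $q(Z^K)\,\E[L(Z^K)(L(Z^K)-1)\cdots(L(Z^K)-j+1)]/K^{j-1}$. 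Passing to $K \to \infty$ and invoking (i) to substitute $Z^K$ by $1$ inside the integrands recovers the transition functions of the $k$-Kingman coalescent at rate $q(1) m_2(1)$.

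\textbf{Main obstacle.} The delicate step is to dominate the moment expressions uniformly in $K$ so as to interchange limit and expectation and discard the multi-merger contributions. This is precisely the role of the uniform integrability condition \eqref{eq:UI_moment}: it ensures that $\E[L(z)^2 \1_{\{L(z) > A\}}]$ is uniformly small near $z = 1$, which by truncating at $L \leq A$ (using $L^j \leq A^{j-2} L^2$) and handling the tail via \eqref{eq:UI_moment} suffices to bound the $j$-fold factorial moments $\E[(L)_j]/K^{j-1}$ by a $o(1)$ quantity for $j \geq 3$. This shows that no simultaneous mergers survive in the limit and all coalescences are binary at rate $q(1) m_2(1)$. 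Skorohod convergence of $(\Pi^{k,K}_\tau)_{\tau \leq T}$ to the $k$-Kingman coalescent then follows from the convergence of the finite-dimensional distributions via the standard characterization of partition-valued coalescent processes through their pairwise coalescence times.
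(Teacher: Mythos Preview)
Your outline for part~(i) matches the paper's strategy (\Cref{prop:coupling_frozen} via \Cref{lem:mdp_smalllem}): both iterate a fixed-length concentration estimate over $O(K)$ windows using the Markov property. The paper is more careful about truncating large births (\Cref{lem:coupling_capped} produces a cutoff $\Gamma_K = o(K)$ with $K^2\Prob(L>\Gamma_K)\to 0$, which is what makes a fourth-moment BDG bound go through), but your plan lands in the same place.

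For part~(ii) there is a genuine gap, and it lies exactly where you locate the ``main obstacle''. You propose to compute the unpenalised expected number of $k$-tuples with a prescribed genealogy and divide by $(Z_{KT})_k\sim K^k$. Under Assumption~\ref{hyp:moments} only the \emph{second} moment of $L(z)$ is finite; for $k\ge 3$ the raw moment $\E\big[\sum_{\bm u}\varphi(\Pi^{\bm u})\big]$ and the factorial moments $\E[(L)_j]$ in your $j$-merger terms may be infinite. Your truncation does not close this: uniform integrability of $L^2$ gives no control whatsoever on $\E[L^j\1_{\{L>A\}}]$ for $j\ge 3$, so the assertion ``$\E[(L)_j]/K^{j-1}=o(1)$'' is simply meaningless when $\E[(L)_j]=\infty$. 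This is precisely why the paper works with the \emph{penalised} moments $\E\big[\e^{-\beta Z_t^K}\sum_{\bm v}\varphi(\Pi^{\bm v})\big]$, $\beta>0$: the exponential weight makes everything finite and propagates through the recursion of \Cref{thm:recursion_moment} as the discounted factorial moments $m_{d,\beta/K}(z)=\E[(L)_d\,\e^{-\beta L/K}]<\infty$.

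The penalisation is not free: the base case of the recursion becomes a Feynman--Kac term
$\mathbf{Q}^k_{Z_0}\big[\e^{-\beta Z^K_{Kt}}\exp\big(k\int_0^{Kt}q(Z^K_s)(m(Z^K_s)-1)\,ds\big)\big]$
under the $k$-spine measure, and estimating it (\Cref{prop:estimation_deltak_multitype}, via It\^o's formula for $\log Z^K$, a martingale CLT, and a uniform-integrability step obtained by \emph{reverting} the change of measure) is the bulk of the analytic work --- none of which your outline accounts for. Multiple mergers are then excluded not by a direct bound on $\E[(L)_j]/K^{j-1}$ but by a mass-balance argument in \Cref{prop:cv_moments_planaires}: the binary contributions already saturate the total mass $1$ in the limit, so by Fatou there is no room left for $d\ge 3$. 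The penalty is removed at the very end by testing against $\psi_\gamma(z)=\e^{\beta z}z^{-k}\1\big(|z-1|<\gamma\big)$, which is bounded and cancels the weight on the high-probability event $\{Z^K_{KT}\approx 1\}$.
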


\subsubsection*{Related literature}
Recently, \cite{forien2025} considered a branching process with logistic
competition that falls within our assumptions. The author considers the
scaling limit of the frequency process of the population, obtained by
giving different labels to the initial individuals and following their
frequencies as time goes on. One of the results derived in
\cite{forien2025} is that, under a ($2+\eta$)-moment assumption similar (but weaker) to
Assumption~\ref{hyp:moments}, the frequency process converges to a
Fleming--Viot process. The latter is well known to be in moment duality
with Kingman's coalescent (see \eg \cite{ethier_flemingviot_1993}), which
is consistent with our main result. 
A similar result was previously obtained by \cite{billiard2015stochastic}
in an adaptive dynamics model with binary branching and logistic
competition, that can also encompass the co-existence of several types.
The corresponding genealogy is described in \cite{lepers2021inference},
but the convergence to it is not formally derived.
We would also like to mention that the results in \cite{forien2025} go
beyond the case of finite variance reproduction law, and also give the
limit of the frequency process when the reproduction law has a heavy
tail.

Near the completion of our work, we became aware that \cite{garrett2025}
derived a result very similar to the theorem above. Building on the
estimates of \cite{forien2025}, they prove convergence of the genealogy of
the branching process with logistic competition to Kingman's coalescent.
Besides, they also derive the corresponding convergence to a
$\mathrm{Beta}(2-\alpha, \alpha)$ coalescent -- with $\alpha \in [1,2)$ --
when the reproduction law has a heavy tail. The main ingredient used in 
\cite{garrett2025} is a lookdown construction of the process
\citep{donnelly1999particle}, which is very different from our techniques. The
lookdown process leads to a very efficient proof of the convergence of
the genealogy. 
Again, our results do not rely on those of \cite{forien2025} but we
re-establish them in a more general setting instead. In particular, we relax the
$2+\eta$ moment assumption on the offspring distribution and extend the
interaction beyond the logistic case. This higher level of generality
requires additional technical work (see Remark~\ref{rem:technicalities}).

Adapting the methodology of \cite{garrett2025} to a population with a general trait
(or type) structure would require to use continuous levels in a similar way
to \cite{Kurtz2011, Etheridge2014}, which seems challenging. Conversely,
although our approach might seem more indirect, spinal methods have
already proved to be very successful when studying the genealogy of
branching processes with various trait structures
\citep{harris_coalescent_2020,
foutelrodier2024momentapproachconvergencespatial, schertzer2023spectral,
FST_Semipushed_2024, harris_universality_2024, harris_coalescent_2024,
boenkost2022genealogy, Johnston2019}. The skeleton of our proof should,
in principle, apply generally. We hope (and believe) that our approach
should scale to more complicated population models, in particular to
models incorporating space. 

\subsection{Proof heuristics and outline}

Showing the convergence of the $k$-coalescent would involve computing the
limit of 
\[
    \E\bigg[ 
        \frac{1}{(Z_t)_k} 
        \sum_{\bm{u} \in \mathcal{N}_t:u_1\neq \cdots\neq u_k} 
        \varphi\big(D_t(\bm{u})\big) \bigg],
\]
for any continuous bounded $\varphi$ and where $D_t(\bm{u}) =
(D_t(u_i, u_j))_{i,j \leqslant k}$ is the matrix of pairwise coalescence times
of a sample $\bm{u} = (u_i)_{i \leqslant k}$, and $(X)_k = X(X-1)\cdots(X-k+1)$
is the $k$-th descending factorial of $X$. Since the population size
remains close to its equilibrium, this expression can be approximated by
\begin{equation}\label{eq:moment_UMS_intro} 
    \frac{1}{(Z_0)_k} 
    \E\bigg[\e^{\beta (Z_0-Z_t)} 
        \sum_{\bm{u} \in \mathcal{N}_t:u_1\neq \cdots\neq u_k} 
        \varphi\big(D_t(\bm{u})\big)
    \bigg],
\end{equation}
for any $\beta \geqslant 0$. We call expressions of the form
\eqref{eq:moment_UMS_intro} penalized $k$-moments of the population. 

\begin{rem}\label{rem:beta}
    Estimating \eqref{eq:moment_UMS_intro} with $\beta = 0$ is already
    the key to the approach proposed in \cite{Foutel_Schertzer} to
    studying the genealogy of branching processes. However,
    \eqref{eq:moment_UMS_intro} might be infinite for $\beta=0$ if the
    offspring distribution lacks higher order moments, which requires a
    cumbersome truncation of the offspring law as in \cite{harris_coalescent_2020, 
    boenkost2022genealogy}. The exponential penalization for $\beta > 0$
    circumvents this issue elegantly, which is an idea borrowed from
    \cite{harris_coalescent_2024}.
\end{rem}

Recently, \cite{Bansaye} provided a Feynman--Kac formula for
\eqref{eq:moment_UMS_intro} when $k=1$. It expresses
\eqref{eq:moment_UMS_intro} in terms of a density-dependent branching
process with an additional immigration from a distinguished line of
individuals called a \emph{spine}. The latter is obtained by a martingale
change of measure of the original process, which is reminiscent of the
usual many-to-one formula for branching processes.

In Section~\ref{sec:Topology}, we extend this expression to any $k \geqslant 1$
in a similar spirit to the many-to-few formula \citep{Harris_Roberts}.
Namely, we show in Theorem~\ref{thm:recursion_moment} that \eqref{eq:moment_UMS_intro} 
can be computed recursively from lower order moments using a
density-dependent branching process with immigration along $k$ different
spines. This result is the cornerstone of our approach and is of
independent interest.

The bulk of the technical work that remains is to show that the
population indeed remains close to its equilibrium and to estimate the
Feynman--Kac term arising from Theorem~\ref{thm:recursion_moment}. This is
carried out in Section~\ref{sec:calcul_sto} using stochastic calculus. Finally,
the limit of \eqref{eq:moment_UMS_intro} is formally computed in
Section~\ref{sec:convergence}, in which we complete the proof of
Theorem~\ref{thm:main}.

\begin{table}
\begin{tabular*}{\textwidth}[t]{c|l}
    $(Z_t)_t$ & Population size process \\
    $(Z^{(u)}_t)_t$ & Size of the subfamily descending from particle $u$ \\
    $(Z^K_t)_t = (Z_t/K)_t$ & Population density process\\
    $q(z)$, $L(z)$ & Reproduction rate and offspring distribution at density $z$\\
    $m(z)$, $m_2(z)$ & First and second factorial moment of $L(z)$
\end{tabular*}
\caption{Summary of the main notation used throughout this work.}
\end{table}

\section{The moments of density-dependent branching processes}\label{sec:Topology}

\subsection{Encoding genealogies}\label{sec:marked_forests}

\subsubsection*{Planar forests}

This section introduces the topological objects and the notation used in the
remainder of the article. We encode rooted planar forests using the
Ulam--Harris formalism, \ie as subsets of $\mathcal{U}~=~
\bigcup_{n\in \N} \N^n$. For $u, v \in \mathcal{U}$, recall that $uv$
denote the concatenation of $u$ and $v$, and that $ui$ is interpreted as
the $i$-th child of $u$, for $i \geqslant 1$. The set $\mathcal{U}$ is endowed with
a partial order $\preccurlyeq$ such that, for every $u,v\in\mathcal{U}$,
$u\preccurlyeq v$ if $u$ is an ancestor of $v$:
\begin{equation*}
    u\preccurlyeq v\ \Longleftrightarrow \ \exists w\in 
    \mathcal{U} \cup \{ \varnothing \} ,\ v=uw.
\end{equation*}
We define a rooted planar forest $F$ as a subset of $\mathcal{U}$ such
that 
\begin{enumerate}[(a)]
    \item there exists a number of roots $r\geqslant 1$ such that $i\in F$ if
        and only if $i \in \{1,\ldots, r\}$;
    \item if $v\in F$, then for all $u\preccurlyeq v$ we have that $u\in F$;
    \item for every $u\in F$, there exists $d_u\in \N$ the number of offsprings of $u$ such that $ui\in F$ iff $i\leqslant d_u$.
\end{enumerate}
Finally, we denote by $\leqslant$ the lexicographical order on $\mathcal{U}$
and by $\anc{u}$ the unique parent of $u\in F \setminus \{1,\ldots, r\}$.
Even though Kingman's coalescent is defined as a non-planar tree, working
with the planar order $\leqslant$ induced by the Ulam--Harris labeling will be
convenient when deriving our recursive moment formula.

In the population model from Section~\ref{sec:model}, each particle is further
endowed with a birth time $\lambda_u$ and death time $\theta_u$ such
that $\lambda_u < \theta_u$ and $\lambda_u = \theta_{\anc{u}}$. We 
call the pair $(F, (\lambda_u, \theta_u)_{u \in F})$ a
\textit{chronological} planar forest. Recall that we denoted by 
\[
    \mathcal{N}_t\ =\ \big\{ u \in F,\ t \in [\lambda_u, \theta_u) \big\}
\]
the set of individuals alive at time $t$, and by $D_t(u, v) = t -
\theta_{u \wedge v}$ the coalescence time of $u, v \in \mathcal{N}_t$.

\subsubsection*{Planar coalescents}

Let $t\geqslant 0$, $v_1<\cdots<v_{k}$ be $k$ individuals in ${\cal N}_t$ and write ${\bm
v}\coloneqq(v_1,\ldots,v_k)$.
We encode the genealogy of the sample $\bm{v}$ by a process
$\Pi^{\bm{v}}$ with values in the partitions that we call a planar coalescent and which is constructed as follows. We refer to
Figure~\ref{fig:alternative_chrono_marked} for an illustrated example.
Formally, we let $\Pi^{\bm{v}} = (\Pi^{\bm{v}}_\tau)_{\tau \leqslant t}$ be
defined as 
\[
    i \sim_{\Pi^{\bm{v}}_\tau} j\ \iff\ D_t(v_i, v_j) < \tau
   \ \iff\ \theta_{v_i \wedge v_j} > t - \tau.
\]
Since the forest is planar and $\bm{v}$ is sorted in increasing
$\leqslant$-order, only consecutive blocks can merge at a coalescence time. This
entails that the blocks are made of consecutive integers (see
Figure~\ref{fig:alternative_chrono_marked}), and are thus also naturally
ordered. We call such an object a planar coalescent and  denote by
$\mathbb{G}^k$ the set of planar coalescents on $\{1,\ldots, k\}$.
 
Each coalescence event in a planar coalescent is entirely described by
two parameters $(i,\, d)$ such that blocks indexed from $i$ to $i+d-1$
merge. Therefore, the coalescent $\Pi^{\bm{v}}$ spanned by
the sample $\bm{v}$ can be equivalently encoded as a sequence of
coalescence times and events. 
Let $b^{\bm v}\leqslant k-1$ be the number of coalescence events occurring
between time $0$ and time $t$ in $\Pi^{\bm v}$. 
Denote by $\tau^{\bm v}_j$ the (backward) time when the $j$-th
coalescence event occurs, so that we have $0<\tau^{\bm v}_1 < \cdots <
\tau^{\bm v}_b<t$. Moreover, denote the $j$-th coalescence event as
$ c^{\bm v}_j \coloneqq (i^{\bm v}_j,\, d^{\bm v}_j)$. Again, we use the
notation $(i^{\bm v}_j,\, d^{\bm v}_j)$ to indicate that $i^{\bm v}_j,~\ldots,~i^{\bm v}_j+d^{\bm v}_j-1$ are the $d^{\bm v}_j$
consecutive lineages coalescing at time $\tau^{\bm v}_j$. 
Instead of viewing the genealogy of $\bm{v}$ as the coalescent
$\Pi^{\bm{v}}$, we equivalently encode it as the collection of random
variables
\[
\big({\bm  c}^{\bm v} ,\ \bm{\tau}^{\bm v}\big) \ \coloneqq \   \Big(\left(i_j^{\bm v},\ d_j^{\bm v}\right)_{j\leqslant b^{\bm v}},\ \big(\tau_j^{\bm v}\big)_{j\leqslant b^{\bm v}}\Big).
\]

Finally, we define $\Theta_{\tau_1}(\Pi^{\bm{v}})$ as the planar
coalescent on $\{1,\ldots,k-d^{\bm{v}}_1+1\}$ corresponding to the sequence of coalescence
times and events
\begin{equation}\label{eq:pruning:coalescent}
\Big(\big(i^{\bm v}_j,\ d^{\bm v}_j\big)_{2\leqslant j\leqslant b^{\bm v}},\ \big(\tau^{\bm v}_j-\tau^{\bm v}_1\big)_{2\leqslant j \leqslant b^{\bm v}}\Big).
\end{equation}
Intuitively, $\Theta_{\tau_1}(\Pi^{\bm{v}})$ is the made of the
lineages remaining in $\Pi^{\bm{v}}$ after the first coalescent event. We
refer once more to Figure~\ref{fig:alternative_chrono_marked} for an
illustration.

\begin{figure}
    \centering
\resizebox{0.9\textwidth}{!}{

\begin{tikzpicture}[x=0.75pt,y=0.75pt,yscale=-1,xscale=1]
%Image [id:dp03614653913963106] 
\draw (396.25,257.03) node  {\includegraphics[width=501.38pt,height=372.05pt]{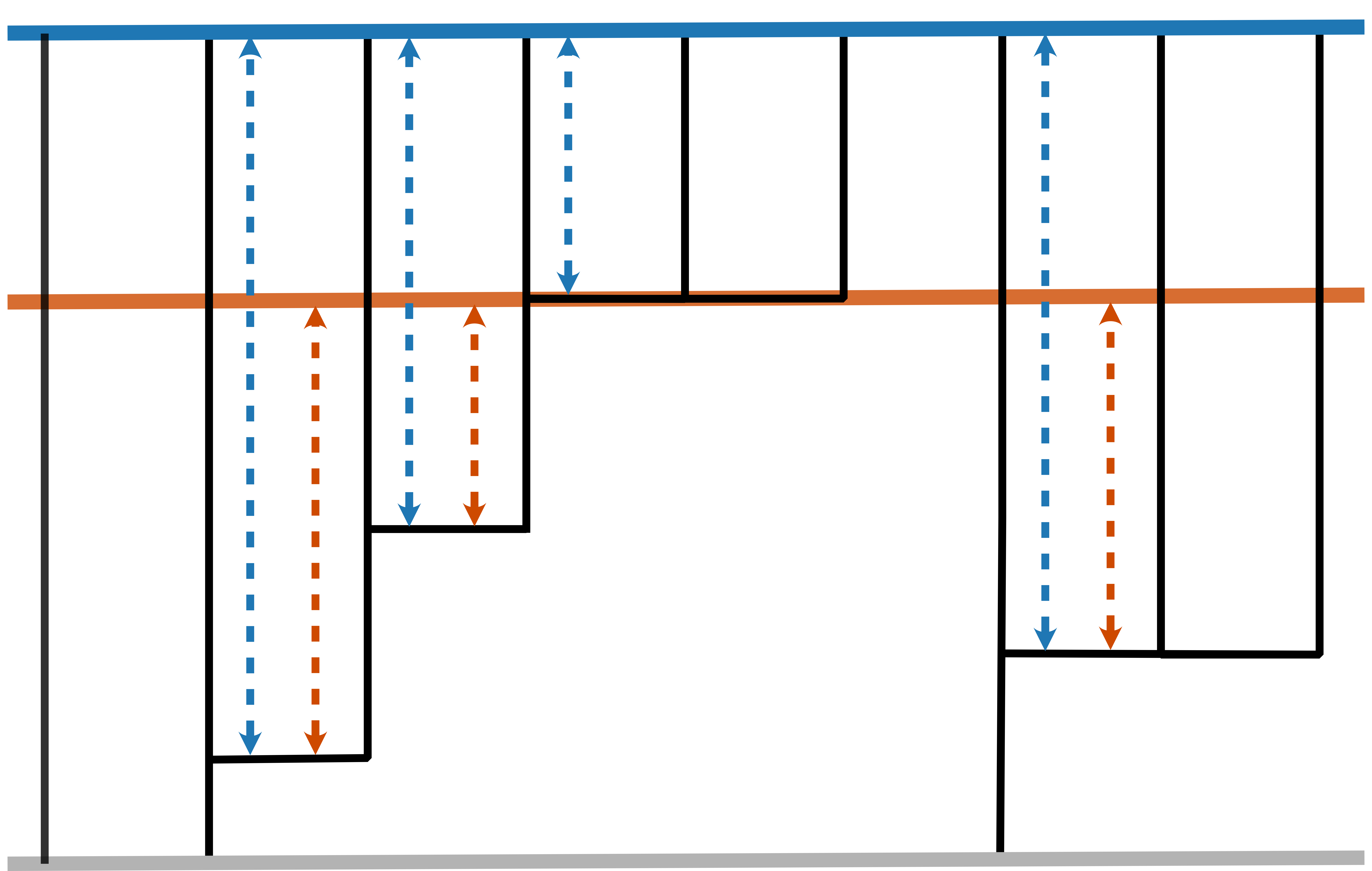}};
% Text Node
\draw (6,170) node [anchor=north west][inner sep=0.75pt]  [font=\Large,color={rgb, 255:red, 31; green, 119; blue, 180 }  ,opacity=1 ]  {$\textcolor[rgb]{0.84,0.11,0.11}{t-\tau _{1}}$};
% Text Node
\draw (41,17.4) node [anchor=north west][inner sep=0.75pt]  [font=\Large,color={rgb, 255:red, 31; green, 119; blue, 180 }  ,opacity=1 ]  {$\textcolor[rgb]{0.12,0.47,0.71}{t}$};
% Text Node
\draw (8,490.4) node [anchor=north west][inner sep=0.75pt]  [font=\Large,color={rgb, 255:red, 31; green, 119; blue, 180 }  ,opacity=1 ]  {${\displaystyle t=0}$};
% Text Node
\draw (353,95) node [anchor=north west][inner sep=0.75pt]  [font=\Large,color={rgb, 255:red, 31; green, 119; blue, 180 }  ,opacity=1 ]  {$\textcolor[rgb]{0.12,0.47,0.71}{\tau_{1}}$};
% Text Node
\draw (271,95) node [anchor=north west][inner sep=0.75pt]  [font=\Large,color={rgb, 255:red, 31; green, 119; blue, 180 }  ,opacity=1 ]  {$\textcolor[rgb]{0.12,0.47,0.71}{\tau_{2}}$};
% Text Node
\draw (584,95) node [anchor=north west][inner sep=0.75pt]  [font=\Large,color={rgb, 255:red, 31; green, 119; blue, 180 }  ,opacity=1 ]  {$\textcolor[rgb]{0.12,0.47,0.71}{\tau_{3}}$};
% Text Node
\draw (194,95) node [anchor=north west][inner sep=0.75pt]  [font=\Large,color={rgb, 255:red, 31; green, 119; blue, 180 }  ,opacity=1 ]  {$\textcolor[rgb]{0.12,0.47,0.71}{\tau_{4}}$};
% Text Node
\draw (330,231.4) node [anchor=north west][inner sep=0.75pt]  [font=\Large,color={rgb, 255:red, 31; green, 119; blue, 180 }  ,opacity=1 ]  {$\textcolor[rgb]{0.84,0.11,0.11}{\tau'_{1}}$};
% Text Node
\draw (255,383.4) node [anchor=north west][inner sep=0.75pt]  [font=\Large,color={rgb, 255:red, 31; green, 119; blue, 180 }  ,opacity=1 ]  {$\textcolor[rgb]{0.84,0.11,0.11}{\tau'_{3}}$};
% Text Node
\draw (640,321.4) node [anchor=north west][inner sep=0.75pt]  [font=\Large,color={rgb, 255:red, 31; green, 119; blue, 180 }  ,opacity=1 ]  {$\textcolor[rgb]{0.84,0.11,0.11}{\tau'_{2}}$};
% Text Node
\draw (730,165) node [anchor=north west][inner sep=0.75pt]  [font=\huge]  {$\begin{drcases}
 \\\\
 \\\\
 \\\\
 \\\\
\end{drcases}$};
% Text Node
\draw (780,16.4) node [anchor=north west][inner sep=0.75pt]  [font=\huge]  {$\begin{drcases}
 \\\\
 \\\\
 \\\\
 \\\\
 \\\\
 \\
\end{drcases}$};
% Text Node
\draw (770,284.11) node [anchor=north west][inner sep=0.75pt]  [font=\Large,color={rgb, 255:red, 31; green, 119; blue, 180 }  ,opacity=1 ,rotate=-90.52]  {$\textcolor[rgb]{0.84,0.11,0.11}{\Pi '=\Theta _{\tau _{1}}( \Pi) \in \mathbb{G}^{6}}$};
% Text Node
\draw (825,230.11) node [anchor=north west][inner sep=0.75pt]  [font=\Large,color={rgb, 255:red, 31; green, 119; blue, 180 }  ,opacity=1 ,rotate=-90.52]  {$\textcolor[rgb]{0.12,0.47,0.71}{\Pi \in \mathbb{G}^{9}}$};

\end{tikzpicture}

   }
    \caption{In blue: illustration of a planar coalescent $\Pi\in \mathbb{G}^9$ and $b=4$ branching points generated
from the sequence of mergers $\bm c=\big((4,3),(3,2),(4,3),(2,2)\big)$ and $\bm\tau = \big((\tau_j)_{1\leqslant j\leqslant 4}\big)$. The planar coalescent $\Theta_{\tau_1}(\Pi)$ obtained from pruning at the first coalescing point is pictured in red. According to \eqref{eq:pruning:coalescent}, $\Theta_{\tau_1}(\Pi)\in\mathbb{G}^{6}$ and is generated from $(\bm c', \bm\tau')=
\big(( c_2, c_3, c_4),~(\tau_2-\tau_1, \tau_3-\tau_1, 
\tau_4-\tau_1)\big)$}\label{fig:alternative_chrono_marked}
\end{figure}

\subsection{Martingale change of measure}\label{sec:spine}

The forthcoming definition introduces a new population process. It will arise as an exponential change of measure of the original process and will serve in the initialization of our recursive moment formula (see Theorem~\ref{thm:recursion_moment}).

\begin{defn}\label{def:spine}
   Fix $k\geqslant 1$ and consider the continuous-time Markov process taking
   values in $\{k, k+1, \ldots\}$ with the following transitions:
   \begin{itemize}
    \item $n\ \longmapsto\  n +  L_1(n/K) - 1$ at rate $k \cdot m(n/K)q(n/K)$, where $ L_1(n/K)$ has the size-biased offspring distribution. The latter is defined for all $z\in \R_+$ as \begin{equation}\label{eq:size_bias_distrib}
 \mathbb{P}\big(L_1(z)={n}\big)\ \coloneqq\ \frac{n}{m(z)}\mathbb{P}\big(L(z)=n\big).
\end{equation}
    \item $n\ \longmapsto\ n + L(n/K) - 1$   at rate $(n-k) \cdot q(n/K)$.
   \end{itemize}
\end{defn}
   
This process can be interpreted as the number of individuals in a
population in which $k$ distinguished particles reproduce at a modified
rate $q \cdot m$, with progeny distributed according to the size-biasing
of $L$. The remaining particles behave according to the same dynamics as
the initial population model described in Section~\ref{sec:model}. The births
from the distinguished particles can be thought of as an immigration into
the population.

Assume that we start the process with $Z_0\geqslant k$ particles and let
\[
    Z^{(i)}_t\ \coloneqq\ \#\{u\in\mathcal{N}_t,\  u\succcurlyeq i\}
\] 
be the size at time $t$ of the subfamily descending from the particle
labeled $i$ (using the Ulam--Harris labeling). 
In what
follows, for $Z \in \N$ we will always use the convention that adding a
superscript $K$ corresponds to renormalizing by $1/K$, and write for
instance 
\[
    Z^K\ \coloneqq\ Z/K.
\]

\begin{lem}[Martingale change of measure]\label{lem:martingale_change_measure_kspine}
    Let $k \geqslant 1$ and introduce the process
    \begin{equation*}
        N^{k}_t\ \coloneqq\ 
        \exp{\Big(-k\int_0^t q(Z^K_s)\big(m(Z^K_s)-1\big)ds\Big)} 
        \prod_{i=1}^k Z^{(i)}_t.
    \end{equation*}
    If neither $(Z_t)_{t \geqslant 0}$ nor the process in Definition~\ref{def:spine}
    explode, then $(N^k_t)_{t \geqslant 0}$ is a nonnegative martingale. In
    that case, if $\mathbb{P}^k$ is defined as   
    \begin{equation} \label{eq:martingaleChange}
         \frac{d\mathbb{P}^k}{d\mathbb{P}}\Big|_{\mathcal{F}_t}\ =\ N^{k}_t,
    \end{equation}
    the law of the population size $(Z_t)_{t \geqslant 0}$ under $\mathbb{P}^k$ is
    that described in Definition~\ref{def:spine}.
\end{lem}
We write $\E^k_{Z_0}[\,\cdot\,]$ for the expectation under
$\mathbb{P}^k_{Z_0}$, that is started from the initial condition $Z_0$.
We note that $N^k_t = 1$ when $k=0$ and it will be convenient to use the
convention that $\mathbb{P}=\mathbb{P}^0$, so that under $\mathbb{P}^0$
the process $(Z^K_t)_{t\geqslant 0}$ is distributed as the initial
density process (with no immigration).

\begin{proof}[Proof of Lemma~\ref{lem:martingale_change_measure_kspine}]
The case $k = 1$ has been treated by \citet[Proposition~2]{Bansaye}.
The extension to $k \geqslant 2$ follows by identical arguments. Nevertheless,
let us outline the main steps here, leaving aside some technical issues.

Denote by $\mathcal{F}_t\coloneqq \sigma\{ Z_s, s\leqslant t\}$ the canonical
filtration of $(Z_t)_{t\geqslant 0}$. First, we check that the process
$(N^{k}_t)_{t\geqslant 0}$ is indeed a nonnegative martingale for
$(\mathcal{F}_t)_{t\geqslant 0}$. The process $(N^{k}_t)_{t\geqslant 0}$ has finite
variation and applying Itô's formula it is not hard to see that for
all $i\in \{1,\ldots, k\}$, 
\[
N^{(i)}_t\ \coloneqq\ \exp{\Big(-\int_0^t
q(Z^K_s)\big(m(Z^K_s)-1\big)ds\Big)}  Z^{(i)}_t
\]
is a nonnegative
local martingale.
Note that from the description of the process, two distinct particles do
not reproduce at the same time. Hence, the processes $(N^{(i)})_{1\leqslant i\leqslant
k}$ are orthogonal, meaning that their quadratic
covariations vanish. It follows that 
  \begin{equation*} 
     N^{k}_t\ \coloneqq\ \prod_{i=1}^k N^{(i)}_t\ =\  \exp\Big(-k\int_0^t q(Z^K_s)\big(m(Z^K_s)-1\big)ds\Big)\prod_{i=1}^k Z^{(i)}_t 
\end{equation*}
is also a nonnegative local martingale.  As we assumed non-explosion of the process under both measures,  an adaptation of the arguments of \citet[Proposition 2]{Bansaye} to $k\geqslant 2$ yields that $N^{k}_t$ is a true martingale and $\mathbb{P}^k$ is well-defined as a martingale change of measure. 
  
We now identify the behavior of the population size under $\mathbb{P}^k$.  Let $h:\N^{k+1}\to \R_+$ be the function
\[
    h:\big(z, z^{(1)},\ldots, z^{(k)}\big)\ \longmapsto\ \prod_{i=1}^k z^{(i)}.
\]  
The process $(Z_t, Z^{(1)}_t, \dots, Z^{(k)}_t)$ is Markov, and if
$\mathcal{G}$ is its generator a direct computation yields
\[
  \mathcal{G}h\big(z, z^{(1)},\ldots, z^{(k)}\big)\ =\ kq(z/K)\big(m(z/K)-1\big)h\big(z, z^{(1)},\ldots, z^{(k)}\big).
\]
The process $N^k_t$ is an exponential martingale for $h$ in the sense that it can be written in the form
  \[ 
  N^k_t\ =\  \frac{h\big(Z_t, Z^{(1)}_t,\ldots, Z^{(k)}_t\big)}{h\big(Z_0, Z^{(1)}_0,\ldots, Z^{(k)}_0\big)}\exp\bigg(-\int_0^t \frac{\mathcal{G}h\big(Z_s, Z^{(1)}_s,\ldots, Z^{(k)}_s\big)}{h\big(Z_s, Z^{(1)}_s,\ldots, Z^{(k)}_s\big)} ds\bigg).
  \]
  By Itô's formula, a straightforward computation shows that
  under $\mathbb{P}^k$, the process $(Z_t, Z^{(1)}_t,\ldots, Z^{(k)}_t)_t$ is a Markov process whose infinitesimal generator is given by

\begin{equation}\label{eq:generator_changemeas_palmowski}
    \tilde{\mathcal{G}}f\ \coloneqq\ \frac{1}{h}\big[\mathcal{G}(f\cdot h)- f\cdot \mathcal{G}h\big],
\end{equation} 
for suitable functions $f:\N^{k+1} \to  \R_+$ (see  \citet[Theorem 4.2]{palmowski_technique_2002} for a similar expression).
Take a function of the first coordinate only belonging to the domain, which we denote by $f:(z,z^{(1)},\ldots, z^{(k)})\longmapsto f(z)$. Then on the one hand we have that
\begin{flalign*}
   & \frac{\mathcal{G}(f\cdot h)(z,z^{(1)},\ldots, z^{(k)})}{h(z,z^{(1)},\ldots, z^{(k)})} \\
 &\ = \ \begin{multlined}[t] \sum_{n\in\N}\sum_{i=1}^k \Big[f(z+n-1)(z^{(i)}+n-1)-f(z)z^{(i)}\Big]\frac{\prod_{j\neq i}^{k}z^{(j)}}{\prod_{i=1}^k z^{(i)}}  l_n(z/K)\cdot q(z/K)z^{(i)}\\
 +\ \sum_{n\in\N}\Big[f(z+n-1)-f(z)\Big]  l_n(z/K)\cdot q(z/K) \Big(z-\sum_{i=1}^k z^{(i)}\Big)\end{multlined}\\
    &\ =\ \sum_{n\in\N} \Big[f(z+n-1)-f(z)\Big] q(z/K)l_n(z/K) z +\sum_{n\in\N} f(z+n-1) (n-1)   l_n(z/K)  q(z/K) k,
\end{flalign*}
where we used the notation $ l_n(z/K)\coloneqq \mathbb{P}\big(L(z/K)=n\big)$.
On the other hand, 
\begin{equation*}
    \frac{f(z-k,z^{(1)},\ldots, z^{(k)})\cdot \mathcal{G}h(z-k,z^{(1)},\ldots, z^{(k)})}{h(z-k,z^{(1)},\ldots, z^{(k)})}\ =\ f(z) \sum_{n\in\N}(n-1)l_n(z/K)\cdot q(z/K) k.
\end{equation*}
Whence using \eqref{eq:generator_changemeas_palmowski} we obtain 

\begin{flalign*}
    \tilde{\mathcal{G}}f(z)\ =&\ \begin{multlined}[t]\sum_{n\in\N} \Big[f(z+n-1)-f(z)\Big] q(z/K)l_n(z/K) z  \\
   +\ \sum_{n\in\N} \Big[f(z+n-1)-f(z)\Big](n-1) l_n(z/K) \cdot q(z/K)k \end{multlined}\\
    =&\ \begin{multlined}[t]\sum_{n\in\N} \Big[f(z+n-1)-f(z)\Big] l_n(z/K) \cdot q(z/K)(z-k)\\
    +\ \sum_{n\in\N} \big[f(z+n-1)-f(z)\big] n l_n(z/K) \cdot q(z/K)k\end{multlined}\\
    =&\ \begin{multlined}[t]\sum_{n\in\N} \Big[f(z+n-1)-f(z)\Big]l_{n}\big(z/K\big)\cdot q(z/K)(z-k)\\
+\ \sum_{n\in\N} \Big[f(z+n-1)-f(z)\Big]\frac{n\cdot l_{n}(z/K)}{m(z/K)}\cdot q(z/K)m(z/K)k.\end{multlined}
\end{flalign*}
We recognize the expression of the infinitesimal generator of the Markov
process whose transitions are given in Definition~\ref{def:spine} applied to $f$,
which wraps the proof up.
\end{proof}   

\subsection{The moment formula}
Let $k\geqslant 1$, $\beta>0$, $t>0$ and $Z_0\in \N$. We define the planar
penalized $k$-moment measure $\mathbf{M}^{k,t}_{Z_0}$ as the measure on
$\mathbb{G}^k\times \R_+$ satisfying that, for $\varphi:\mathbb{G}^k\to
\R_+$ and $\psi:\R_+\to\R_+$ any two bounded and measurable functions,
\begin{equation}\label{eq:moments_planaires}
    \mathbf{M}^{k,t}_{Z_0}(\varphi, \psi)\ \coloneqq\  k!\frac{\e^{\beta Z^K_0}}{\big(Z_0\big)_k} \E_{Z_0}\bigg[ \psi\big(Z^K_t\big) \e^{-\beta Z^K_t}\sum_{\bm{v}:v_1 < \cdots < v_k \in \mathcal{N}_t} \varphi \big(\Pi^{\bm{v}}\big) \bigg],
\end{equation}
where we recall that $\Pi^{\bm{v}}\in \mathbb{G}^{k}$ stands for the planar
coalescent spanned by the ordered $k$-sample $\bm v$. We note that this
measure is finite, yet does not have unit total mass. 
When $\varphi = \1(\tau_1>t)$, that is $\varphi$ is the indicator that the
planar coalescent does not have mergers, we will simply use the notation 
$\mathbf{M}^{k,t}_{Z_0}(\psi)$ and call it the ``base case'' planar
penalized $k$-moment measure subsequently.

Since we only assume that the offspring distribution has finite second
moment, we use an exponential penalization with positive discount rate to
recover finite higher moments. We introduce the so-called penalized
factorial moments below. For $\beta>0$ and $d\in \N$, let
$m_{d}^{\beta/K}({z})$ be the $d$-th factorial moment of $L(z)$ with
exponential penalization, meaning that

\begin{equation*}
    m_{d}^{\beta/K}(z)\ \coloneqq\ \E\Big[\big(L(z)\big)_d\e^{-\beta L(z)/K}\Big].
\end{equation*}
Besides, we denote by $L^{\beta/K}_{d}(z)$ the offspring distribution biased by its penalized $d$-th factorial moment at population density $z\in\R_+$. That is, for ${n}\in \N$ we have
\begin{equation}\label{eq:factorial_moments}
 \mathbb{P}\Big(L^{\beta/K}_{d}(z)={n}\Big)\ \coloneqq\ \frac{(n)_d\cdot\e^{-\beta n/K}}{m_{d}^{\beta/K}(z)}\mathbb{P}\big(L(z)=n\big).
\end{equation}
We point out that the distribution $L^{\beta/K}_d(z)$ takes values in $\{d, d+1, \ldots\}$. For $d=1$ and $\beta=0$, we recover the usual size-biased offspring distribution $L_1(z)$ from \eqref{eq:size_bias_distrib}.

 In this article, we will say that functionals $\varphi$ on $\mathbb{G}^k, k\geqslant 2$ are \textit{of the product form} if they satisfy \eqref{eq:product_form_functionals} for some $d\in\{2,\ldots, k\}$ and $i\leqslant k-d+1$:
    \begin{equation}\label{eq:product_form_functionals}
\forall \Pi\in\mathbb{G}^{k}, \ \ 
\varphi(\Pi)\ \coloneqq\ \1\big( \tau_1<t,\,  c_1=(i,d)\big) \, \varphi_1(\tau_1) \, \varphi_2\big(\Theta_{\tau_1}(\Pi)\big),
    \end{equation}
where we recall from Section~\ref{sec:marked_forests} that $(\tau_1, c_1)$ denotes the first coalescence time and event of the planar coalescent $\Pi$. Thus, the indicator of $\{\tau_1 < t,\, c_1 = (i,d)\}$ entails that $\varphi$ charges elements of $\mathbb{G}^k$ which have at least one coalescing event in $(0,t)$ and such that the first blocks to merge are those labeled $\{i,\ldots , i+d-1\}$.

\begin{thm}[Induction for the planar moment measures]\label{thm:recursion_moment}
    Let $k \geqslant 1$ and suppose that $(Z_t)_{t \geqslant 0}$ does not explode
    under $\mathbb{P}$ nor under $\mathbb{P}^k$ defined in
    \eqref{eq:martingaleChange}. Let $(i,d)\in\N^2$ be such that
    $i+d-1\leqslant k$, and $\varphi_1,\varphi_2$ be two bounded measurable
    functions. Consider a functional $\varphi$ of the product form
    introduced in \eqref{eq:product_form_functionals}. Then, taking
    $\psi:\R_+\to \R_+$ bounded and measurable, the measure
    $\mathbf{M}^{k,t}_{Z_0}$ satisfies the following recursive formula:
    \begin{align}\label{eq:base_case}
       &\mathbf{M}^{k,t}_{Z_0}(\psi)\ =\ \e^{\beta Z^K_0}\E^k_{Z_0}\bigg[ \psi(Z^K_t)\e^{-\beta Z^K_t}\exp\Big(k\int_0^t q(Z^{K}_s) \big(m(Z^K_s)-1\big)ds\Big)\bigg] \\ \label{eq:induction}
       & \mathbf{M}^{k,t}_{Z_0}(\varphi,\psi)\  =\  \frac{1
       }{k-d+1}\binom{k}{d}\int_0^t \frac{\varphi_1(t-s)}{(Z_0-k+d-1
)_{d-1}}\mathbf{M}^{k-d+1,s}_{Z_0}\Big(\varphi_2,\,
   \mathcal{M}^{(k,d),t-s}_{\cdot}(\psi)\Big)ds.
      \end{align}
    The function of the density $\mathcal{M}^{(k,d),t-s}_{\cdot}(\psi)$
    on the right-hand side is defined as
\begin{equation}\label{eq:psi_tilde_recursive_formula}
\mathcal{M}^{(k,d),t-s}_{\cdot}(\psi):  z\ \longmapsto\ \mathcal{M}^{(k,d),t-s}_{z}(\psi)\ \coloneqq\ q(z) m^{\beta/K}_d(z)\e^{\beta/K}\E\Big[\mathbf{M}^{k,t-s}_{zK+ L^{\beta/K}_d(z)-1}(\psi)\Big],
\end{equation}
where the biased distribution $ L^{\beta/K}_d(z)$ is defined in
\eqref{eq:factorial_moments}.
\end{thm}

\begin{rem}[Many-to-few]
The recursion from Theorem~\ref{thm:recursion_moment} can be viewed as a many-to-few formula. 
One could expand this expression and obtain a very similar result to \citet{schertzer2023spectral, FST_Semipushed_2024}, in which the moment measure is directly computed from the law of a single uniform forest. 
Since our limiting object is a coalescent process which is constructed backward in time, we chose to use a backward in time recursion, conversely to the forward in time one found in \citet{schertzer2023spectral, FST_Semipushed_2024}.
\end{rem}

\begin{rem}
    As will become apparent in the proof, this recursion is
    obtained by applying the Markov property at the last branch point in
    the genealogy. The same argument could be adapted to a wide range of
    population models. As an example, for the Moran model in which the
    population size is constant, it would directly show that  its
    genealogy is Kingman's coalescent.
\end{rem}

\begin{proof}[Proof of Theorem~\ref{thm:recursion_moment}]
We proceed by induction on the number of branching points in the planar
coalescent on $\{1,\ldots, k\}$ spanned by the sampled individuals at time $t$.

We start by
determining the weight under $\mathbf{M}^{k,t}_{Z_0}$ of elements of $\mathbb{G}^k$ which do not have mergers in $(0,t)$ (that is, such that
$\tau_1>t$) which we will call the ``base case'' in the following. By
definition, the planar coalescent spanned by $\bm v$ has no mergers if
and only if each individual in the sample descends from a distinct
ancestor at time $t=0$. Thus,
\begin{equation*}
    \sum_{\bm{v}:v_1<\cdots < v_k \in \mathcal{N}_t}\1\big( \tau_1^{\bm v}>t\big)\ =\ \sum_{\bm{u}:u_1<\cdots<u_k\in\mathcal{N}_0}\, \prod_{i=1}^k Z^{(u_i)}_t ,
\end{equation*}
where $Z^{(u_i)}_t=\#\big\{ u\in \mathcal{N}_t,\,u\succcurlyeq u_i
\big\}$ is the size at time $t$ of the subfamily descending from
$u_i\in\mathcal{N}_0$. It follows from this observation and the
exchangeability of the initial particles that
\begin{equation*}
    \begin{aligned}
\mathbf{M}^{k,t}_{Z_0}(\psi)\ &=\ k!\frac{\e^{\beta Z^K_0}}{(Z_0)_k}\E_{Z_0}\bigg[ \psi(Z^K_t) \e^{-\beta Z^K_t}\sum_{\bm{v}:v_1<\cdots < v_k \in \mathcal{N}_t}\1( \tau_1^{\bm v}>t)\bigg]\\
&=\ \e^{\beta Z^K_0}\E_{Z_0}\bigg[\psi(Z^K_t)\e^{-\beta Z^K_t}\prod_{i=1}^k Z^{(i)}_t\bigg],
    \end{aligned}
\end{equation*}
where again $(Z^{(i)}_t)_i$ denote the respective sizes at time $t$ of subfamilies descending from the ancestral particles labeled by $\{1,\ldots, k\}$. 
By Lemma~\ref{lem:martingale_change_measure_kspine} we have that
\begin{align*}
    \E_{Z_0}\bigg[\psi(Z^K_t)\e^{-\beta Z^K_t}\prod_{i=1}^k Z^{(i)}_t \bigg]\ &=\ \E_{Z_0}\bigg[\psi(Z^K_t)\e^{-\beta Z^K_t}\exp\Big(k\int_0^t q(Z^K_s)\big(m(Z^K_s)-1\big)ds\Big) N^k_t \bigg]\\
   &=\ \E^k_{Z_0}\bigg[\psi(Z^K_t)\e^{-\beta Z^K_t}\exp\Big(k\int_0^t q(Z^K_s)\big(m(Z^K_s)-1\big)ds\Big)\bigg],
\end{align*}
which gives us the base case of the induction.
\medskip

Assume now that \eqref{eq:induction} holds for all forests with $b'<b$ branching points and $k'\leqslant k$ leaves. Let us consider a functional $\varphi$ of the product form given in \eqref{eq:product_form_functionals}. 

The strategy is to decompose the planar coalescent at the time $\tau^{\bm v}_1$ of its first merger. Then, we apply the induction hypothesis twice. On the one hand, to the planar coalescent $\Theta_{\tau^{\bm v}_1}(\Pi^{\bm v})$ corresponding to the genealogy of the sample starting from the first coalescence time $\tau^{\bm v}_1$, which has one fewer merger. On the other hand, to the ``trivial'' planar coalescent encoding the genealogy between backward times $0$ and $\tau^{\bm v}_1$, which corresponds to the base case of the induction.\\

The product form of $\varphi$ implies that this functional vanishes outside planar coalescents $\Pi^{\bm v}$ such that at the first coalescent event, particles labeled $v_i,\ldots, v_{i+d-1}$ find their common ancestor.

Consequently, summing over all $k$-samples $\bm v$ picked at time $t$ and such that $ c^{\bm v}_1=(i, d)$ amounts to the following: as a first step, select a sample $\bm{u}$ of size $k - d + 1$ at a time $s< t$, which satisfies that the particle in the $i$-th position undergoes reproduction before time $t$. Then, choose $d$ particles among the offspring of this $i$-th particle. Finally, single out one particle at the present time from each of the subfamilies descending from these $d$ particles, as well as from those descending from the remaining $k - d$ particles $u_j$, for $j \neq i$. 

That is,

\begin{multline}\label{eq:combinatorial_identity}
\sum_{\bm{v}:v_1<\cdots< v_k \in  \mathcal{N}_t}\1\big(\tau_1^{\bm v}<t,  c^{\bm v}_1=(i,d)\big)\varphi_1(\tau^{\bm v}_1)\varphi_2\big(\Theta_{\tau_1}(\Pi^{\bm v})\big)\ =
 \sum_{u \in \mathcal{U}, \theta_{u}<t}\varphi_1(t-\theta_{u})\\\times \sum_{\substack{\bm u: u_1< \cdots < u_{k-d+1} \in \mathcal{N}_{\theta_{u}^-}\\u_i=u}}\varphi_2(\Pi^{\bm u})\sum_{w_1<\cdots<w_d \leqslant \kappa_u} \prod_{j\leqslant d}Z^{(u w_j)}_t  \prod_{j\in [k-d+1]\setminus \{i\}} Z^{(u_j)}_t .
 \end{multline}
From there, we have

\begin{align*} 
    \mathbf{M}^{k,t}_{Z_0}(\varphi,\psi)\  &=\ k!\frac{\e^{\beta Z^K_0}}{\big(Z_0\big)_k} \E_{Z_0}\bigg[\psi(Z^K_t)\e^{-\beta Z^K_T}\sum_{\bm{v}:v_1<\cdots< v_k \in  \mathcal{N}_t}\1\big(\tau_1^{\bm{v}}<t,  c_1^{\bm v}=(i,d)\big)\varphi_1(\tau^{\bm v}_1)\varphi_2\big(\Theta_{\tau_1}(\Pi^{\bm v})\big) \bigg]
    \\ &\overset{(A)}{=}\ \begin{multlined}[t] k!\frac{\e^{\beta Z^K_0}}{\big(Z_0\big)_k}\E_{Z_0}\bigg[ \sum_{u \in \mathcal{U}, \theta_{u}<t}\varphi_1(t-\theta_{u})\sum_{\substack{\bm u:u_1< \cdots < u_{k-d+1} \in \mathcal{N}_{\theta_{u}^-}\\ u_i=u}}\varphi_2(\Pi^{\bm u})\\
    \times \sum_{w_1<\cdots<w_d \leqslant \kappa_u} \E\Big[\psi(Z^K_t)\e^{-\beta Z^K_T} \prod_{p\leqslant d}Z^{(u w_p)}_t\ \cdot  \prod_{j\neq i} Z^{(u_j)}_t   \, \big\vert\, Z_{\theta_{u}}\Big] \bigg]\end{multlined}\\
     &\overset{(B)}{=}\ \begin{multlined}[t] k!\frac{\e^{\beta Z^K_0}}{\big(Z_0\big)_k}\E_{Z_0}\bigg[\sum_{u \in \mathcal{U}, \theta_{u}<t}\varphi_1(t-\theta_{u})\sum_{\substack{\bm u: u_1< \cdots < u_{k-d+1} \in \mathcal{N}_{\theta_{u}^-}\\ u_i=u}}\varphi_2(\Pi^{\bm u}) \\
   \times\sum_{w_1< \cdots < w_d \leqslant \kappa_u}\e^{-\beta Z^K_{\theta_u}}\mathbf{M}_{Z_{\theta_{u}}}^{k,t-\theta_{u}}(\psi)\bigg]\end{multlined}\\
    &\overset{(C)}{=}\ \begin{multlined}[t] k!\frac{\e^{\beta Z^K_0}}{\big(Z_0\big)_k}\E_{Z_0}\bigg[\sum_{u \in \mathcal{U}, \theta_{u}<t}\varphi_1(t-\theta_{u})\sum_{\substack{\bm u: u_1< \cdots < u_{k-d+1} \in \mathcal{N}_{\theta_{u}^-}\\ u_i=u}}\varphi_2(\Pi^{\bm u})\e^{-\beta Z^K_{\theta_u^-}}\\
    \times\ \frac{(\kappa_u)_d}{d!}\frac{\e^{- (\kappa_u-1)\beta/K}}{ m^{\beta/K}_d\big(Z^K_{\theta_{u}^-}\big)} m^{\beta/K}_d\big(Z^K_{\theta_{u}^-}\big)\mathbf{M}_{Z_{\theta_{u}^-}+\kappa_u-1}^{k,t-\theta_{u}}(\psi)\bigg]\end{multlined}\\
    &\overset{(D)}{=}\ \begin{multlined}[t]\frac{k!}{d!}\frac{\e^{\beta Z^K_0}}{\big(Z_0\big)_k}\E_{Z_0}\bigg[\sum_{u \in \mathcal{U}, \theta_{u}<t}\varphi_1(t-\theta_{u})\sum_{\substack{\bm u: u_1< \cdots < u_{k-d+1} \in \mathcal{N}_{\theta_{u}^-}\\ u_i=u}}\varphi_2(\Pi^{\bm u})\e^{-\beta Z^K_{\theta_u^-}}\\
    \times\  m^{\beta/K}_d(Z^K_{\theta_{u}^-})\e^{\beta/K}\E\Big[\mathbf{M}_{Z_{\theta_{u}^-}+ L^{\beta/K}_d(Z_{\theta_{u}^-}/K)-1}^{k,t-\theta_{u}}(\psi)\, \big\vert\,Z_{\theta_u^-}\Big]
    \bigg].\end{multlined}
\end{align*}
In the above, step $(A)$ follows from the combinatorial identity \eqref{eq:combinatorial_identity} and conditioning on the population size at time $\theta_{u}$, $(B)$ from an application of the strong Markov property at time $\theta_{u}$ and recognizing the base case of the induction over $(\theta_u, t)$. To obtain line $(C)$, we used that $Z_{\theta_u}= Z_{\theta_u^-}+\kappa_u-1$, we planarized the progeny and introduced the penalized factorial moment of order $d$. Finally, $(D)$ follows from the fact that $\kappa_u$ is independent of $\theta_u$ conditionally upon $Z_{\theta_u^-}$. We also used that $\kappa_u$ is distributed according to $L\big(Z_{\theta_u^-}/K\big)$, which we biased by $(\kappa_u)_d \e^{-\beta \kappa_u/K}$ by choosing $d$ ordered distinguished lineages among the progeny.
Each particle in $\mathcal{N}_s$ dies at the same rate $q(Z^K_s)$. Therefore, compensating for the jumps we obtain that

\begin{align*}
    \mathbf{M}^{k,t}_{Z_0}(\varphi,\psi) 
   \ & =\ \begin{multlined}[t] \frac{k!}{d!}\e^{\beta Z^K_0}\frac{\big(Z_0\big)_{k-d+1}}{\big(Z_0\big)_k}\E_{Z_0}\bigg[\int_0^t \sum_{u \in \mathcal{N}_s}q(Z^{K}_s)\e^{-\beta Z^K_s} \frac{\varphi_1(t-s)}{(Z_0)_{k-d+1}}\sum_{\substack{ u_1< \cdots < u_{k-d+1} \in \mathcal{N}_{s}\\ u_i=u}}\varphi_2(\Pi^{\bm u})\\
   \times m^{\beta/K}_d(Z^{K}_s)\e^{\beta/K}\, \E\Big[ \mathbf{M}^{k,t-s}_{Z_s-1+ L^{\beta/K}_d(Z^K_s)}(\psi)\, \big\vert \, Z_s\Big]\bigg]ds\quad\end{multlined}\\
    &=\ \frac{k!}{d!}\int_0^t  \frac{\varphi_1(t-s)\e^{\beta Z^K_0}}{(Z_0-k+d-1)_{d-1}}\E_{Z_0}\bigg[\frac{\e^{-\beta Z^K_s}}{(Z_0)_{k-d+1}} \mathcal{M}^{(k,d),t-s}_{Z^K_s}(\psi) \sum_{ u_1 < \cdots < u_{k-d+1} \in \mathcal{N}_{s}}\varphi_2(\Pi^{\bm u})\bigg]\\
    &=\ \frac{1}{k-d+1}\binom{k}{d} \int_0^t  \frac{\varphi_1(t-s)}{(Z_0-k+d-1)_{d-1}} \mathbf{M}^{k-d+1,s}_{Z_0}\big(\varphi_2, \mathcal{M}^{(k,d),t-s}_{\cdot}(\psi) \big)ds.
\end{align*}
This yields \eqref{eq:induction} for planar chronological forests with $k$ leaves and $d$ branching points, thereby concluding the proof of the proposition.
\end{proof}

\section{Stochastic analysis of the population size}\label{sec:calcul_sto}
We recall that we use the convention that $\mathbb{P}^0=\mathbb{P}$ the distribution of the original density process.

\subsection{The population density under \texorpdfstring{$\mathbb{P}^k$}{Pk}}
In this section, we fix $k\geqslant 0$. We recall that $L_1(z)$ is the size-biased offspring distribution at
density $z$ from Definition~\ref{def:spine}.

It is not hard to see from the transitions described earlier that, under $\mathbb{P}^k$, the density process $(Z^K_t)_{t\geqslant 0}=(Z_t/K)_{t\geqslant 0}$ has the same
distribution as the solution to the following stochastic differential
equation:
\begin{multline}\label{eq:SDE_under_Q}
    Z^K_{t}-Z^K_0\ =\ \int_{0}^{t} \int_{\R_+}\int_0^1 \frac{L(Z^K_{s^-},\xi)-1}{K}  \1\big(\varrho\leqslant q({Z}^K_{s^-})(Z_{s^-}-k)\big)\mathcal{Q}_1(dsd\varrho d\xi)\\
    +\ \int_{0}^{t}\int_{\R_+}\int_0^1  \frac{L_1(Z^K_{s^-},\xi)-1}{K}  \1\big(\varrho\leqslant k q(Z_{s^-}^K) m(Z^K_{s^-})\big)\mathcal{Q}_2(dsd\varrho d\xi),
\end{multline}
where $\mathcal{Q}_1$ and $\mathcal{Q}_2$ are independent Poisson processes on $\R_+\times\R_+\times (0,1)$ with intensity $ds\otimes d\varrho \otimes d\xi$ (see \cite{ikeda_watanabe}). Above, $(z,\xi)\longmapsto L(z,\xi)$ and $(z,\xi)\longmapsto L_1(z,\xi)$ are  nonnegative measurable functions chosen in such a way that, if  $U\sim \mbox{Unif}([0,1])$, the random variables $L(z,U)$ and $L_1(z,U)$ have the same distribution as $L(z)$ and $L_1(z)$, respectively.

 Introducing the compensated measures $ \bar{\mathcal{Q}}_i(dsd\varrho d\xi)= \mathcal{Q}_i(dsd\varrho d\xi)-dsd\varrho d\xi$ for $i\in\{1,2\}$ and recalling that $m(z)=\E[L(z)]$ and $m_2(z)=\E[L(z)(L(z)-1)]$, we obtain
\begin{equation*}
       Z^K_{t}-Z^K_0\ =\ \int_0^{t} \big({m}({Z}^K_s)-1\big)q({Z}^K_s){Z}^K_s ds + \int_0^{t} \frac{k}{K}  q({Z}^K_s){m}_2({Z}^K_s) ds + W^{(k)}_t,
\end{equation*}
where the process $ W^{(k)}$ is defined by

\begin{multline}\label{eq:martingale_before_cuting_in_three_parts} 
W^{(k)}_t\ =\ \int_0^t \int_{\R_+} \int_0^1 \frac{L(Z^K_{s^-},\xi)-1}{K}\1\big(\varrho \leqslant q({Z}^K_{s^-})({Z}_{s^-}-k)\big)\bar{\mathcal{Q}}_1( ds d\varrho d\xi)\\
   + \int_0^t \int_{\R_+} \int_0^1 \frac{L_1(Z^K_{s^-},\xi)-1}{K} \1\big(\varrho \leqslant k q({Z}^K_{s^-}) m({Z}^K_{s^-})\big)  \bar{\mathcal{Q}}_2( ds d\varrho d\xi).
\end{multline}

\subsection{Convergence to the equilibrium density}\label{sec:deviations}
For any $\gamma > 0$, we denote by $\mathscr{B}_{\gamma} \coloneqq \{z,\ |z-1|<\gamma\}$, and introduce the exit time
\begin{equation}\label{eq:stopping_times}
      T_{\gamma} \ \coloneqq\ \inf\big\{t\geqslant 0,\ {Z}^K_t\notin \mathscr{B}_{\gamma}\big\}.
\end{equation}

\begin{prop}[Convergence of the population density]\label{prop:coupling_frozen}
Let $(Z_0)_{K}$ be a sequence of initial conditions satisfying $Z_0/K\to 1$. Then for every $T>0$, $k\geqslant 0$ and $\gamma>0$,
\begin{equation*}
    \mathbb{P}^k_{Z_0}(T_{\gamma}<KT\big)\ \underset{K\to\infty}{\longrightarrow}\ 0.
\end{equation*}
\end{prop}

The proof of Proposition~\ref{prop:coupling_frozen} relies on Lemma~\ref{lem:mdp_smalllem}. 
In turn, the proof of the latter result uses Lemma~\ref{lem:coupling_capped} below.
\begin{rem}\label{rem:technicalities}
    We point out that Lemma~\ref{lem:coupling_capped} and
    Lemma~\ref{lem:mdp_smalllem} are technical results which we use to
    re-establish the estimates of \cite{forien2025} in a more general
    setting. Indeed, we do not make any assumption on the offspring
    distribution besides Assumption~\ref{hyp:equilibrium} and Assumption~\ref{hyp:moments}.
    In particular, the offspring distribution might be irregular at $z=1$
    as we only impose continuity of its first and second moments. At this
    level of generality, to prove convergence of the density on the
    evolutionary timescale we decompose the jumps according to their size
    and couple the original process to a population model confined in a
    macroscopic neighborhood of the equilibrium.
\end{rem}

\begin{lem}\label{lem:coupling_capped}
    Under Assumption~\ref{hyp:moments}, there exists a sequence
    $(\Gamma_K)_K$ such that
    \[
        \lim_{K \to \infty} \frac{\Gamma_K}{K}\ =\ 0,
        \qquad
        \adjustlimits{\lim}_{K \to \infty}{\sup}_{z \in (1-\varepsilon, 1+\varepsilon)} 
        K^2 \mathbb{P}( L(z) > \Gamma_K)\ =\ 0.
    \]
For this sequence we have in addition that 
\[
\adjustlimits{\lim}_{K \to \infty}{\sup}_{z \in (1-\varepsilon, 1+\varepsilon)} 
        K m(z) \mathbb{P}\big(L_1(z) > \Gamma_K\big)\ =\ 0
\]
\end{lem}

\begin{proof}
    For any $\eta > 0$,
    \[
        K^2 \mathbb{P}( L(z) \geqslant \eta K) 
      \  \leqslant \
        \frac{1}{\eta^2} \E\big[ L(z)^2 \1_{\{ L(z) \geqslant \eta K\}} \big].
    \]
    Therefore, by \eqref{eq:UI_moment},
    \[
         \adjustlimits{\lim}_{K \to \infty}{\sup}_{z \in (1-\varepsilon, 1+\varepsilon)} K^2 \mathbb{P}( L(z) \geqslant \eta K) 
      \ =\ 0.
    \]
    From that, one can extract a sequence $(\eta_K)_K$ such that
    $\eta_K \to 0$ and 
    \[
        \adjustlimits{\lim}_{K \to \infty}{\sup}_{z \in (1-\varepsilon, 1+\varepsilon)}  K^2 \mathbb{P}( L(z) \geqslant \eta_K K) 
       \ =\ 0.
    \]
    In addition, for all $z \in (1-\varepsilon, 1+\varepsilon)$,
    \[
     K m(z) \mathbb{P}( L_1(z) \geqslant \eta_K K) \ \leqslant\ \sup_{z \in (1-\varepsilon, 1+\varepsilon)}  K^2 \eta_K \mathbb{P}( L(z) \geqslant \eta_K K). 
    \]
    Thus,
    \begin{equation*}
        \adjustlimits{\lim}_{K \to \infty}{\sup}_{z \in (1-\varepsilon, 1+\varepsilon)}  K m(z) \mathbb{P}( L_1(z) \geqslant \eta_K K)\  = \ 0.
    \end{equation*}
The result follows by setting $\Gamma_K = \eta_K K$ and from the
    definition of $L_1(z)$. 
\end{proof}

With this, we can now state the subsequent concentration result. 
\begin{lem}\label{lem:mdp_smalllem} 
  For all $T>0$,  $\delta>0$ and $ \gamma>0$ and any sequence of initial population sizes $(Z_0)_K$ for which $Z_0/K\in \mathscr{B}_{(1-\delta)\gamma}$, we have

\begin{equation*}
   \lim_{K\to\infty} K\Big(1-\mathbb{P}^k_{Z_0}\big( T_{\gamma}> T , \  Z_{T}^K\in \mathscr{B}_{
    (1-\delta)\gamma}\big)\Big) \ = \ 0.
\end{equation*} 

\end{lem}

\begin{proof}
    Recall the parameter $\varepsilon$ from condition \eqref{eq:UI_moment}. Since $\gamma\mapsto T_{\gamma}$ is increasing, there is no loss of generality in taking $\gamma < \varepsilon$.
    The proof relies on  a modification of $(Z^K_t)_{t \geqslant 0}$ in which
    the large birth events are removed and which is ``frozen'' upon
    exiting a neighborhood of the equilibrium. Let
    $(\bar{Z}^K_t)_{t \geqslant 0}$ be the solution to \eqref{eq:SDE_under_Q},
    but with modified birth rate 
    \[
   \bar q: \begin{cases} 
        \bar{q}(z)\ =\ q(z),\ z\in \mathscr{B}_{\gamma}\\ 
        \bar{q}(z)\ =\ 0, \ z\notin \mathscr{B}_{\gamma},
    \end{cases}
    \]
    and modified offspring distributions
    \[
        \bar{L}(z, \xi)\ = \ \1_{\{ L(z, \xi) \leqslant \Gamma_K \}} L(z, \xi) 
        + \1_{\{ L(z, \xi) > \Gamma_K \}},
        \quad
        \bar{L}_1(z, \xi)\ =\  \1_{\{ L_1(z, \xi) \leqslant \Gamma_K \}} 
        L_1(z, \xi) + \1_{\{ L_1(z, \xi) > \Gamma_K \}}.
    \]
    We introduce
    \begin{multline*}
        T'_\Gamma \ =\ \inf\Big\{ t > 0 : 
            \int_0^t \int_{\R_+} \int_0^1 
            \1\big(L(Z_{s^-}^K, \xi) > \Gamma_K, \varrho \leqslant (Z_{s^-}-k)q(Z_{s^-}^K)\big)
            \mathcal{Q}_1(dsd\varrho d\xi) \\
            +
            \int_0^t \int_{\R_+} \int_0^1 
            \1\big(L_1(Z_{s^-}^K, \xi) > \Gamma_K, \varrho \leqslant k
            q(Z_{s^-}^K)m(Z^K_{s^-}) \big)
            \mathcal{Q}_2(dsd\varrho d\xi) > 0
        \Big\}
    \end{multline*}
    the first time when a birth/immigration event of more than $\Gamma_K$
    offspring occurs. We also let $\bar{T}_\gamma$ and $\bar{T}'_\Gamma$ be
    defined as $T_\gamma$ and $T'_\Gamma$ but using $(\bar{Z}^K_t)_t$. It
    is clear that the two processes coincide until time $T_\gamma
    \wedge T'_\Gamma$. Conditional on the trajectory of $(\bar Z^K_t)_t$, the process giving the time of occurrence of the jumps larger than $\Gamma_K$ is a Cox point process with intensity
    \[q(\bar{Z}^K_s) \Big((\bar{Z}_s-k)
                \mathbb{P}\big(L(\bar{Z}^K_s) > \Gamma_K \mid \bar{Z}_s\big)+ k  m(\bar{Z}^K_{s})
                \mathbb{P}\big(L_1(\bar{Z}^K_s) > \Gamma_K \mid \bar{Z}_s\big)\Big)ds.\] 
                It follows that
    \begin{align*}
       \mathbb{P}^k_{Z_0}( T_\gamma > T) 
       \ &\geqslant\  
       \mathbb{P}^k_{Z_0}( \bar{T}_\gamma > T,\ \bar{T}'_\Gamma > T) \\
       &=\E^k_{Z_0}\Big[\1_{\{\bar{T}_\gamma > T\}}\mathbb{P}\big(\bar{T}'_\Gamma > T \, \vert\, (\bar{Z}_s)_{s\leqslant T} \big)\Big]  \\
        &=\ \begin{multlined}[t]
         \E^k_{Z_0}\bigg[ \1_{\{\bar{T}_\gamma > T\}} 
            \exp\Big( -\int_0^T q(\bar{Z}^K_s) (\bar{Z}_s-k)
                \mathbb{P}\big(L(\bar{Z}^K_s) > \Gamma_K \mid \bar{Z}_s\big)\\
                + k q(\bar{Z}^K_s) m(\bar{Z}^K_{s})
                \mathbb{P}\big(L_1(\bar{Z}^K_s) > \Gamma_K \mid \bar{Z}_s\big)
        ds \Big) \bigg].
        \end{multlined}
    \end{align*}
As we picked $\gamma\in(0, \varepsilon)$, on the event $\{\bar{T}_\gamma > T\}$ we have $|\bar{Z}^K_s-1|< \varepsilon$. Therefore,
    \begin{align*}
       \mathbb{P}^k_{Z_0}( T_\gamma > T)   \  &\geqslant\ \begin{multlined}[t]
        \mathbb{P}^k_{Z_0}( \bar{T}_\gamma > T)\bigg[1 - T\Big( 
            (1+\varepsilon) K \sup_{z \in (1-\varepsilon, 1+\varepsilon)} 
            q(z)\mathbb{P}\big(L(z) > \Gamma_K\big) \\
            + k \sup_{z \in (1-\varepsilon, 1+\varepsilon)} 
        q(z)m(z)\mathbb{P}\big(L_1(z) > \Gamma_K\big)\Big)\bigg].
        \end{multlined}
    \end{align*}
    By choosing $(\Gamma_K)_K$ as in Lemma~\ref{lem:coupling_capped}, the
    result is proved if we can show that 
    \[
        \lim_{K\to\infty} K \mathbb{P}^k_{Z_0}( \bar{T}_\gamma > T)\ =\ 0.
    \]
    It will be convenient to introduce the notation
    \[
        \bar{m}(z)\ =\ \E\big[\bar{L}(z)\big],\qquad \bar{m}_2(z)\ =\
        \E\big[\bar{L}(z)(\bar{L}(z)-1)\big].
    \]
The process $(\bar Z^K_t)_t$ solves an SDE obtained by replacing $q,\ L$ and $L_1$ by $\bar{q},\ \bar L$ and $\bar L_1$ in \eqref{eq:SDE_under_Q}.
By linearizing it around the equilibrium density, we have
\begin{align}\nonumber
    d\bar Z^K_t\ 
    &= \ 
    (\bar{m}(\bar{Z}^K_t)-1) \bar{q}(\bar{Z}^K_t) \bar{Z}^K_t dt
    + \frac{k}{K} \bar{q}(\bar{Z}^K_t) \bar{m}_2(\bar{Z}^K_t) dt 
    + d\bar W^{(k)}_t \\ \label{eq:linearized_sde}
    &=\  
    \alpha(\bar{Z}^K_t) (\bar{Z}^K_t - 1) dt 
    + \Big(\frac{k}{K} \bar{m}_2(\bar Z^K_t)+ \bar{m}(\bar{Z}^K_t)-m(\bar{Z}^K_t)\Big)\bar q(\bar{Z}^K_t) \bar{Z}^K_t dt 
    + d\bar W^{(k)}_t,
\end{align}
where for $z\in \R_+$,  $\alpha(z) = \frac{m(z)-1}{z-1}\bar{q}(z)z$.
Under Assumption~\ref{hyp:equilibrium}, the mean value theorem gives the existence
of a constant $\bar \alpha > 0$ such that  
\[
    \sup_{z\in \mathscr{B}_{\gamma}} \alpha(z) \leqslant -\bar \alpha.
\]
With this, we handle the various terms in 
\eqref{eq:linearized_sde} separately. 

First, note that $\bar{m}_2(z) \leqslant m_2(z)$ and
\[
    0 \ \leqslant \ m(z) - \bar{m}(z)\ \leqslant\ \E\big[L(z)\1_{\{L(z) > \Gamma_K\}}\big]\ \leqslant\ 
    \E\big[L(z)^2\big]^{1/2} \mathbb{P}\big(L(z) > \Gamma_K\big)^{1/2}.
\]
Therefore, by Lemma~\ref{lem:coupling_capped} and the fact that $\bar{q}(z)=0$ for $z\notin \mathscr{B}_{\gamma}$, there exists $C_1$ such that
almost surely,
\[
   \Big| \frac{k}{K} \bar{m}_2(\bar Z^K_t) +
    \bar{m}(\bar{Z}^K_t) - m(\bar{Z}^K_t) \Big| \bar{q}(\bar{Z}_t^K) \bar{Z}_t^K 
   \ \leqslant\  \frac{C_1}{K},
\]

Second, we control the martingale terms using a version of the
Burkhölder--Davis--Gundy inequality from
\citet[Theorem 2.1]{ma_elena_hernandez-hernandez_generalisation_2022}.
This requires us to control the moments of its predictable quadratic
variation $\big(\big\langle \bar{W}^{(k)}, \bar{W}^{(k)} \big\rangle _t\big)_{t \geqslant 0}$ as well
as the compensator of $\big(\sum_{s \leqslant t} \big|\Delta
\bar{W}^{(k)}_s\big|^{2p}\big)_{t \geqslant 0}$, for some $p > 1$. For $p=2$, the
latter compensator is
\begin{align*}
    A_t\ 
    &=\ \begin{multlined}[t]
    \frac{1}{K^4} \int_0^t 
    \E\Big[\big(\bar{L}(\bar Z^K_s)-1\big)^{4}\, \vert\,
\bar{Z}_s\Big] \bar{q}(\bar Z^K_s) (\bar Z_s-k) ds \\
    + \frac{1}{K^4} \int_0^t 
    \E\Big[\big(\bar{L}^{(1)}(\bar Z^K_s)-1\big)^{4}\, \vert\,
\bar{Z}_s\Big] k \bar{m}(\bar Z^K_s) \bar{q}(\bar Z^K_s) ds
    \end{multlined} \\
    &\leqslant\ \begin{multlined}[t]
    \Big( \frac{\Gamma_K}{K} \Big)^2 \frac{1}{K} \int_0^t 
    \E\big[L(\bar Z^K_s)^{2}\, \vert\,
\bar{Z}_s\big] \bar{q}(\bar Z^K_s) \big(\bar Z^K_s - \tfrac{k}{K}\big) ds \\
    +  \Big( 
         \frac{\Gamma_K}{K} \Big)^3 \frac{1}{K} \int_0^t  
    \E\big[L(\bar Z^K_s)^{2}\, \vert\,
\bar{Z}_s\big] k \bar{m}(\bar Z^K_s) \bar{q}(\bar Z^K_s) ds
    \end{multlined} \\
    &\leqslant\  \Big( \frac{\Gamma_K}{K} \Big)^2 \frac{C_2}{K},
\end{align*}
for some $C_2 > 0$. Similarly, 
\begin{align*}
   \big\langle \bar{W}^{(k)}, \bar{W}^{(k)} \big\rangle _t\ 
   &=\ \begin{multlined}[t]
   \frac{1}{K} \int_0^t 
   \E\Big[\big(\bar L(\bar Z^K_s)-1\big)^2\, \vert\,\bar{Z}^K_s \Big] 
   \bar{q}(\bar Z_s)\big(\bar Z^K_s - \tfrac{k}{K} \big)ds \\
   + \frac{1}{K^2} \int_0^t  \E\Big[\big(\bar{L}_1(\bar{Z}^K_s)-1\big)^2\, \vert\,\bar{Z}_s\Big] \bar{q}(\bar Z^K_s) k ds
   \end{multlined} \\
   &\leqslant\ \frac{C_3}{K},
\end{align*}
for some $C_3 > 0$.
Using Markov's inequality combined with \citet[Theorem~2.1]{ma_elena_hernandez-hernandez_generalisation_2022}, for any $c >0$, there exist $C_4$, $C_5>0$ such that, 
\begin{equation*}
\begin{aligned}
    \mathbb{P}^k_{Z_0}\Big(\sup_{t\leqslant T}\ |\bar{W}^{(k)}_t|\geqslant c \Big)\ &\leqslant\ C_4 \E^k_{Z_0}\Big[\big\langle \bar{W}^{(k)}, \bar{W}^{(k)} \big\rangle _T^{2}+ A_T\Big] \\
    &\leqslant\ \frac{C_5}{K} \Big(\frac{\Gamma_K}{K}\Big)^2,
\end{aligned}
\end{equation*}
which is negligible compared to $1/K$ as $K\to \infty$ by our choice of $(\Gamma_K)_K$.
Finally, going back to \eqref{eq:linearized_sde} and working on the event that
$\sup_{t\leqslant T}\ |\bar{W}^{(k)}_t| \leqslant c$, the first two steps above show that for all $t\leqslant T$,
\begin{equation}\label{eq:before_gronwall}
   d( \bar{Z}^K_t - 1) \ \leqslant\ 
   2 c dt
     -  \bar\alpha\cdot  (\bar Z^K_t-1) dt.
\end{equation}
By an application of Gr\"onwall's inequality, we have that the final value $\bar Z^K_T$ satisfies
\begin{align*}
 |\bar{Z}^K_T - 1 | \ &\leqslant\ 
   \e^{-\bar \alpha T}|\bar Z^K_0-1|+  \frac{2c}{\bar \alpha}(1-\e^{-\bar \alpha T})\\
   &<\  \e^{-\bar \alpha T}(1-\delta)\gamma+\frac{2 c}{\bar \alpha}\\
   &<\ (1-\delta)\gamma,
\end{align*}
for $c$ small enough.
The same equation yields for all $t\geqslant 0$ that 
\[
 |\bar{Z}^K_t - 1 | \ \leqslant\ 
   |\bar Z^K_0-1|
   +  \frac{2c}{\bar \alpha}\ 
   <\ \gamma,
\]
which concludes the proof of the lemma.
\end{proof}

The proof of Proposition~\ref{prop:coupling_frozen} now consists in iterating Lemma~\ref{lem:mdp_smalllem} over $K$ intervals of time with fixed length $T$, with the help of the Markov property. We point out that this proof uses the same idea as in a recent work of \cite{Julie_Zsofia}.

\begin{proof}[Proof of Proposition~\ref{prop:coupling_frozen}]
For $i\leqslant K$ we denote by $T_i\coloneqq iT$ and consider the sequence of events
\begin{equation*}
    \mathcal{A}_i\ \coloneqq\ \big\{ T_{\gamma} > T_i,\ Z^K_{T_i} \in \mathscr{B}_{(1-\delta)\gamma}\big\}.
\end{equation*}
In addition, for all $i\in \{2,\ldots,K \}$ we use the Markov property at time $T_i$ to derive that
\begin{align*}
    \mathbb{P}^k_{Z_0}\big(\mathcal{A}_{i+1}^c\big)\ &=\ \mathbb{P}^k_{Z_0}\big(\mathcal{A}_{i+1}^c\cap \mathcal{A}_{i}\big)+\mathbb{P}^k_{Z_0}\big(\mathcal{A}_{i}^c\big)\\
    &\leqslant\ \mathbb{P}^k_{Z_{0}}\big(\mathcal{A}_{i} \cap \{T_{\gamma}<T_{i+1} \cup {Z}^K_{T_{i+1}}\not\in \mathscr{B}_{(1-\delta)\gamma} \}\big)  +  \mathbb{P}^k_{Z_{0}}\big(\mathcal{A}_{i}^c\big) \\
    &\leqslant\ \mathbb{P}^k_{Z_{0}}\big(\mathcal{A}_{i} \cap \{T_{\gamma}\in (T_i, T_{i+1}) \cup {Z}^K_{T_{i+1}}\not\in \mathscr{B}_{(1-\delta)\gamma} \}\big)  +  \mathbb{P}^k_{Z_{0}}\big(\mathcal{A}_{i}^c\big) \\
    &\leqslant\ \sup_{z \in \mathscr{B}_{(1-\delta)\gamma}} \mathbb{P}^k_{zK}\big(\mathcal{A}_{1}^c\big) +  \mathbb{P}^k_{Z_{0}}\big(\mathcal{A}_{i}^c\big) .
\end{align*}
Hence by a short induction, for all $j$,
$\mathbb{P}^k_{Z_{0}}\big(\mathcal{A}^c_j\big)\leqslant  j\sup_{z \in \mathscr{B}_{(1-\delta)\gamma}}\mathbb{P}^k_{zK}\big(\mathcal{A}^c_1\big)$ and 
\begin{align*}
    \mathbb{P}^k_{Z_{0}}\big( T_{\gamma} < KT \big)
    \ &\leqslant\
    \mathbb{P}^k_{Z_{0}}\big( \mathcal{A}^c_K \big) 
    \\
    &\leqslant\ K\sup_{z \in \mathscr{B}_{(1-\delta)\gamma}}\big(1- \mathbb{P}^k_{zK}\big(\mathcal{A}_1\big)\big),
\end{align*}
which by Lemma~\ref{lem:mdp_smalllem} vanishes as $K\to\infty$, for all initial population sizes $Z_0$ satisfying $Z_0/K\in \mathscr{B}_{(1-\delta)\gamma}$, hence taking $K$ large enough, for all initial condition satisfying $Z_0/K\to 1$.
\end{proof}

\subsection{Convergence to Brownian motion}\label{sec:densityprocess}
The main ingredient coming into play to identify Kingman's coalescent is
to estimate the exponential term in the base case of the moment formula
in Theorem~\ref{thm:recursion_moment} by studying the population density
process. Here, we give some estimates on the population density process
$(Z^K_t)_t$. Recall from \eqref{eq:SDE_under_Q} the semimartingale
decomposition of the population density under the measure $\mathbb{P}^k_{Z_0}$, for sequences of initial sizes $(Z_0)_K$ satisfying $Z_0/K\to 1$.

\begin{lem}\label{lem:cv_martingale_jacod_shiryaev}
Fix $k\geqslant 0$ and recall from \eqref{eq:martingale_before_cuting_in_three_parts} the martingale part $(W^{(k)}_t)_{t\geqslant 0}$ of the density process under $\mathbb{P}^k_{Z_0}$. 
   Then under $\mathbb{P}^k_{Z_0}$, 
   \[
   \big(W^{(k)}_{Kt}\big)_{t\geqslant 0}\ \underset{K\to\infty}{\longrightarrow}\ \big(\sqrt{q(1)m_2(1)}B_t\big)_{t\geqslant 0},
   \]
   in distribution for the topology of uniform convergence on compact sets and where $B$ is a standard Brownian motion. Moreover, the quadratic variation process 
   \[
   \big([W^{(k)},W^{(k)}]_{Kt}\big)_{t\geqslant 0}\ \coloneqq\ \Big(\sum_{s\leqslant Kt}|\Delta W^{(k)}_s|^2\Big)_{t\geqslant 0}
   \] 
   converges in probability to $\big( q(1)m_2(1)t\big)_{t\geqslant 0}$ as $K\to\infty$. 
\end{lem}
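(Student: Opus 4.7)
The plan is to apply a functional martingale central limit theorem for purely discontinuous local martingales with asymptotically vanishing jumps. Two ingredients must be verified: (i) the predictable quadratic variation $\langle W_k, W_k\rangle_{Kt}$ converges in probability to the deterministic limit $q(1) m_2(1) t$; and (ii) a Lindeberg-type condition ensuring that the jumps of $W_k(K\cdot)$ are asymptotically negligible. Together these also yield convergence in probability of the optional quadratic variation $[W_k, W_k]_{Kt}$ to the same limit, since $[W_k,W_k] - \langle W_k, W_k\rangle$ is a local martingale whose own predictable quadratic variation is controlled by the Lindeberg quantity.

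For (i), the Poisson integral representation \eqref{eq:martingale_before_cuting_in_three_parts} gives, by standard stochastic calculus,
\[
\langle W_k, W_k\rangle_{Kt}\ =\ \frac{1}{K^2}\int_0^{Kt} q(Z^K_s)(Z_s-k)\,\E\big[(L(Z^K_s)-1)^2 \mid \mathcal{F}_s\big]\, ds\ +\ \mathcal{I}_K(t),
\]
where $\mathcal{I}_K(t)$ denotes the analogous immigration contribution. After the time change $s = Ku$, the first integral becomes $\int_0^t q(Z^K_{Ku})(Z^K_{Ku}-k/K)\,\E\big[(L(Z^K_{Ku})-1)^2\mid \mathcal{F}_{Ku}\big]\, du$. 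By \Cref{prop:coupling_frozen}, $\sup_{u \leq T}|Z^K_{Ku}-1| \to 0$ in probability under $\mathbf{Q}^k$; combined with the continuity of $q$ and $m_2$ at $z=1$, the identity $\E[(L-1)^2] = m_2 - m + 1$ (which equals $m_2(1)$ at the equilibrium, since $m(1)=1$), and the uniform integrability \eqref{eq:UI_moment} which provides the necessary domination, the integral converges in probability to $q(1) m_2(1) t$. The immigration piece $\mathcal{I}_K$ carries a deterministic rate $k\, q(Z^K_s) m(Z^K_s)$ together with a $1/K^2$ prefactor, so after capping $L^{(1)}$ at $\Gamma_K$ via \Cref{lem:coupling_capped} its second-moment contribution is $O(\Gamma_K^2 / K) = o(1)$.

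For (ii), the jumps of $W_k(K\cdot)$ have size $(L-1)/K$ or $(L^{(1)}-1)/K$, so on the event $\{T_\gamma > KT\}$ the compensator of $\sum_{s\leq Kt} |\Delta W_k(s)|^2 \1_{\{|\Delta W_k(s)|>\epsilon\}}$ is dominated, up to a constant, by
\[
T \cdot \sup_{z \in \mathscr{B}_\gamma} \E\big[L(z)^2 \1_{\{L(z) > \epsilon K\}}\big],
\]
which vanishes by \Cref{hyp:moments}. Lenglart's inequality then turns this into a convergence in probability of the Lindeberg quantity itself. The immigration contribution is handled similarly, using that $K\sup_z \Prob(L^{(1)}(z)>\Gamma_K) \to 0$, as follows from Markov's inequality combined with \Cref{lem:coupling_capped}. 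Since the maximum jump vanishes in probability, this also yields $[W_k, W_k]_{Kt} - \langle W_k, W_k\rangle_{Kt} \to 0$ in probability, proving the second statement of the lemma.

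The main technical obstacle is that \Cref{hyp:moments} controls only the second moment of $L$ uniformly near equilibrium, while the size-biased distribution $L^{(1)}$ may have infinite second moment globally. This is handled by localizing at the stopping time $T_\gamma$ (using \Cref{prop:coupling_frozen} to guarantee $T_\gamma > KT$ with probability tending to one) and by the capping scheme of \Cref{lem:coupling_capped}: on $[0, KT]$ the original martingale coincides, up to an event of vanishing probability, with its truncated version, and the calibration $\Gamma_K = o(K)$ is used crucially to make all immigration-type contributions asymptotically negligible without modifying the process on the relevant time window.
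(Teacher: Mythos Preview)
Your strategy is the same as the paper's: localize at $T_\gamma$, truncate offspring at level $\Gamma_K$, and apply a martingale CLT to the truncated process (the paper organizes this as an explicit three-term split $W_k = W'_k + W''_k + W'''_k$ into capped bulk, capped immigration, and large jumps, but the content is identical to your ``work on the event where the capped and original processes coincide'').

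One estimate is incorrect, however. You assert that the capped immigration contribution to the predictable bracket is $O(\Gamma_K^2/K) = o(1)$. The bound $O(\Gamma_K^2/K)$ comes from the crude inequality $\E\big[(L^{(1)})^2\1_{\{L^{(1)}\le\Gamma_K\}}\big]\le\Gamma_K^2$, but $\Gamma_K^2/K$ need not vanish: \Cref{lem:coupling_capped} only guarantees $\Gamma_K/K\to 0$, and under \Cref{hyp:moments} alone one cannot in general take $\Gamma_K=o(\sqrt{K})$ while retaining $K^2\Prob(L>\Gamma_K)\to 0$. The fix, which is what the paper does, is to spend only one factor of $\Gamma_K$: $\E\big[(L^{(1)})^2\1_{\{L^{(1)}\le\Gamma_K\}}\big]\le\Gamma_K\,\E[L^{(1)}]=\Gamma_K\,\E[L^2]/m$, which is $O(\Gamma_K)$ uniformly for $z$ near $1$, giving the correct order $\Gamma_K/K=o(1)$. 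A similar caution applies to your justification of $K\Prob(L^{(1)}>\Gamma_K)\to 0$: Markov's inequality with the first moment of $L^{(1)}$ only yields $O(K/\Gamma_K)\to\infty$; one needs instead Cauchy--Schwarz on $\E[L\1_{\{L>\Gamma_K\}}]$ together with $\Prob(L>\Gamma_K)=o(K^{-2})$ from \Cref{lem:coupling_capped} to obtain the required $o(K^{-1})$.
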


\begin{proof}
The result will follow from the martingale central limit theorem.

By Proposition~\ref{prop:coupling_frozen}, up to freezing the process when it exits a macroscopic ball $\mathscr{B}_{\gamma}$ for some $\gamma > 0$, we can assume that the reproduction rate $q$ vanishes outside $\mathscr{B}_{\gamma}$. Another consequence of Proposition~\ref{prop:coupling_frozen} is that  
\[
\sup_{s < Kt} |\Delta W^{(k)}_s| \ \to \ 0,
\]
 in probability, as otherwise the large jumps would make the density escape $\mathscr{B}_{\gamma}$.  Therefore, we can extract a sequence $(\eta_K)_K$ such that $\eta_K\to 0$ and 

 \[
\mathbb{P}\big(\sup_{s < Kt} |\Delta W^{(k)}_s| >  \eta_K\big)\ \to \ 0.
\]

From there, we decompose the 
process $W^{(k)}$ into three
parts, namely
\[
   W^{(k)}_t\ =\ W'_t+W''_t+ W'''_t, 
\]
where
\begin{align*}
   & W'_t\ =\ \int_0^t \int_{\R_+} \int_0^1 \frac{L(Z^K_{s^-},\xi)-1}{K}\1\big(\varrho \leqslant q({Z}^K_{s^-})({Z}_{s^-}-k)\big)\bar{\mathcal{Q}}_1( ds d\varrho d\xi)\\
   & W''_t\ =\ \int_0^t \int_{\R_+} \int_0^1 \frac{L_1(Z^K_{s^-},\xi)-1}{K}\1\big( L_1(Z^K_{s^-},\xi) \leqslant \eta_K K\big)\1\big(\varrho \leqslant k q({Z}^K_{s^-}) m({Z}^K_{s^-})\big)  \bar{\mathcal{Q}}_2( ds d\varrho d\xi) \\
   & W'''_t\ =\ W^{(k)}_t-W'_t-W''_t.
\end{align*}

The martingales $ W'$ and $ W''$ are square-integrable. Re-scaling time by $K$, the predictable quadratic variation (or angle bracket) of $W'$ satisfies the following: 
      \begin{equation}\label{eq:angle_bracket}
        \big\langle W', W' \big\rangle _{Kt}
        \  =\ \int_0^{t} \big(Z^K_{Ks}- k/K)q(Z^K_{Ks}\big)\big(m_2(Z^K_{Ks})-m({Z}^K_{Ks})+1\big)ds.
      \end{equation}
Since $Z_{Ks}^K\to 1$ uniformly on $(0,t)$ in probability under $\mathbb{P}^k_{Z_0}$,
using that $q,\, m$ and $m_2$ are  continuous at $1$, the integrand in
\eqref{eq:angle_bracket} converges uniformly to $q(1)m_2(1)$ for $s$ in
$(0,t)$. Thus, the angle bracket process
$\big( \big\langle W', W' \big\rangle _{Kt}\big)_t$ converges to $\big(q(1)m_2(1)t\big)_t$
almost-surely and uniformly on compact intervals.

Besides, the predictable quadratic variation of $W''$ can be
upper-bounded as below:
\begin{align*}
 \big\langle W'', W'' \big\rangle _{tK}\ &=\ \int_0^{tK} \E\Big[(L_1(Z^K_s)-1)^2\1(L_1(Z^K_s)\leqslant \eta_K K)\,\vert\, Z_s \Big] \frac{k}{K^2} q(Z^K_s)m(Z^K_s) ds\\
  & \leqslant\ \int_0^{tK}  \E\Big[(L_1(Z^K_s)-1)\1(L_1(Z^K_s)\leqslant \eta_K K)\,\vert\, Z_s \Big]  \frac{\eta_K}{K} k q(Z^K_s)m(Z^K_s)  ds\\
  &\leqslant\ \eta_K \int_0^{t} k q(Z^K_{Ks})m_2(Z^K_{Ks}) ds,
\end{align*}
which converges in probability to zero, since $\eta_K=o(1)$.

Since the jump sizes converge in probability to zero
as $K\to\infty$, it follows from  the martingale central limit theorem
stated in \citet[Chapter~VIII, Theorem~3.11]{jacod-shiryaev} that 
$\big(W'_{Kt}+W''_{Kt}\big)_t$ converges in distribution to a
Brownian motion with variance $q(1)m_2(1)$, as $K\to\infty$. In addition, the same result gives the convergence in probability of the quadratic variation process to $(m_2(1)q(1)t)_t$.

 It remains to show that the last term $W'''$ vanishes. By
definition of the compensated Poisson measure, $W'''$ is the
difference between a jump process driven by $\mathcal{Q}_2$  and a finite
variation part:
 \begin{multline*}
    W'''_t\ =\ 
 \int_0^t \int_{\R_+} \int_0^1 \frac{L_1(Z^K_{s^-},\xi)-1}{K} \1\big(L_1(Z^K_{s^-},\xi)> K \eta_K \big)\1\big(\varrho \leqslant k q({Z}^K_{s^-}) m({Z}^K_{s^-})\big)  \mathcal{Q}_2( ds d\varrho d\xi)\\
-\int_0^t\frac{k}{K}q({Z}^K_{s}) m({Z}^K_{s})\E\Big[\big(L_1(Z^K_{s})-1\big) \1\big(L_1(Z^K_{s})> K \eta_K \big)\,  \big\vert\,  Z_{s}\Big]  ds.
 \end{multline*}

It is direct that with high probability we do not see any jumps in $W'''$, as otherwise the density would escape $\mathscr{B}_{\gamma}$, hence the first line in the above vanishes. 

Besides, since $m(z) \E\big[ L_1(z)\1(L(z)> K \eta_K)\big] \leqslant
\E\big[ L^2(z)\1(L(z)> K \eta_K)\big]$, by \eqref{eq:UI_moment} the
integrand of the finite variation part vanishes uniformly as $K\to\infty$. As a result, the process $\big(W'''_{Kt}\big)_t$ converges in probability to zero. The convergence in distribution under $\mathbb{P}^k_{Z_0}$ for every $k\geqslant 0$ stated in the lemma eventually follows from the Slutsky lemma.
\end{proof}

\subsection{Estimation of the Feynman--Kac term}

This section is dedicated to establishing the following result for the
so-called base case \eqref{eq:base_case} of the moment induction formula.

\begin{prop}\label{prop:estimation_deltak_multitype}
Let $k\geqslant 0$, $t>0$ and let $\psi \colon \R_+ \to \R$ be bounded and
continuous at 1. Then, for any sequence of initial population sizes $(Z_0)_K$ satisfying
$Z_0/K\to 1$ as $K\to\infty$,
\begin{equation*}
    \lim_{K\to\infty} \e^{\beta Z^K_0}\E^k_{Z_0}\bigg[\psi\big( Z^K_{Kt}\big) \e^{-\beta Z^K_{Kt}} \exp\Big(\int_0^{Kt}  k q({Z}^K_s)({m}({Z}^K_s)-1)ds\Big)\bigg] = \psi(1)\exp\Big(-q(1)m_2(1)\mbinom{k}{2}t\Big).
\end{equation*}

\end{prop}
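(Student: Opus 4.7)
The plan is to evaluate the Feynman--Kac exponent by converting it into quantities accessible via the semimartingale decomposition \eqref{eq:SDE_under_Q}, then combining with \Cref{prop:coupling_frozen} and \Cref{lem:cv_martingale_jacod_shiryaev}. Setting $f(z) := q(z)(m(z)-1)$, \Cref{hyp:equilibrium} gives $f(1) = 0$ and $f'(1) = q(1)m'(1) < 0$. The key algebraic identity is the decomposition $f(z) = z f(z) - (z-1) f(z)$: the first piece $z f(z)$ is exactly the leading drift term in \eqref{eq:SDE_under_Q}, so rearranging the SDE yields
\[
\int_0^{Kt} Z^K_s f(Z^K_s)\, ds\ =\ Z^K_{Kt} - Z^K_0\ -\ \frac{k}{K}\int_0^{Kt} q(Z^K_s) m_2(Z^K_s)\, ds\ -\ W_k(Kt).
\]
For the second piece, Taylor expansion at $1$ gives $(z-1)f(z) = q(1) m'(1)(z-1)^2 + o((z-1)^2)$. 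By \Cref{prop:coupling_frozen} and continuity of $q, m_2$ at $1$, the drift integral above contributes $kq(1)m_2(1)t$ in the limit, while \Cref{lem:cv_martingale_jacod_shiryaev} gives $W_k(Kt) \Rightarrow \sqrt{q(1)m_2(1)}\, B_t$ for a standard Brownian motion $B$. The proof thus reduces to computing the limit of $\int_0^{Kt}(Z^K_s-1)^2\, ds$.

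The key technical step is to prove
\[
\int_0^{Kt}(Z^K_s-1)^2\, ds\ \xrightarrow[K\to\infty]{\Prob}\ \frac{m_2(1)}{2|m'(1)|}\, t.
\]
I would establish this by applying It\^o's formula to $Y^K_s := Z^K_s - 1$:
\[
(Y^K_{Kt})^2 - (Y^K_0)^2\ =\ 2\int_0^{Kt} Y^K_{s^-}\, dA_s\ +\ 2\int_0^{Kt} Y^K_{s^-}\, dW_k(s)\ +\ [W_k, W_k]_{Kt},
\]
where $A$ denotes the drift of $Z^K$ under $\mathbf{Q}^k$. The left-hand side vanishes in probability by \Cref{prop:coupling_frozen}, and $[W_k,W_k]_{Kt} \to q(1)m_2(1)t$ by \Cref{lem:cv_martingale_jacod_shiryaev}. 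The stochastic integral against $W_k$ has predictable quadratic variation bounded by $4\sup_{s\leq Kt}|Y^K_s|^2 \cdot \langle W_k, W_k\rangle_{Kt}$, which vanishes thanks to \Cref{prop:coupling_frozen}, so Doob's inequality makes this term $o_\Prob(1)$. Linearizing $A$ near the equilibrium then leaves the deterministic balance $2 q(1)m'(1)\lim_{K\to\infty}\int_0^{Kt}(Y^K_s)^2\, ds = -q(1)m_2(1)t$, yielding the claim.

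Assembling the pieces, the Feynman--Kac exponent converges jointly with $W_k(Kt)$ to
\[
-k^2 q(1)m_2(1)t\ +\ \tfrac{k}{2} q(1)m_2(1)t\ -\ k\sqrt{q(1)m_2(1)}\, B_t,
\]
where the middle term is $-kq(1)m'(1)\cdot \frac{m_2(1)t}{2|m'(1)|}$, using $m'(1)<0$. Since $Z^K_0, Z^K_{Kt} \to 1$ in probability and $\psi$ is continuous at $1$, the prefactor $\e^{\beta(Z^K_0 - Z^K_{Kt})}\psi(Z^K_{Kt})$ tends to $\psi(1)$. Using the Gaussian exponential moment $\E[\e^{-k\sqrt{q(1)m_2(1)}B_t}] = \e^{k^2 q(1)m_2(1)t/2}$, the limit of the expectation collapses to $\psi(1)\exp(-\binom{k}{2}q(1)m_2(1)t)$, as required.

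The chief obstacle is the passage from convergence in probability to convergence of expectations, since both the Feynman--Kac integrand and $\e^{-kW_k(Kt)}$ are unbounded in $K$. I would handle this by first restricting to the high-probability event $\{T_\gamma \geq KT\}$ from \eqref{eq:stopping_times}, on which the integrand $q(Z^K_s)(m(Z^K_s)-1)$ is deterministically bounded by continuity of $q$ and $m$ at $1$. The only remaining unbounded piece is $\e^{-kW_k(Kt)}$, whose uniform integrability follows from the same truncation of $W_k$ into parts with small bounded jumps (of size $\Gamma_K/K\to 0$) plus a vanishing remainder used in the proof of \Cref{lem:cv_martingale_jacod_shiryaev}, combined with Azuma-type exponential concentration for the truncated pieces.
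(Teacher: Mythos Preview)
Your route to the limit in distribution is genuinely different from the paper's and is in principle sound. The paper applies It\^o's formula to $\log Z^K$, which directly produces the identity
\[
\int_0^{Kt} q(Z^K_s)(m(Z^K_s)-1)\,ds\ =\ -kq(1)m_2(1)t\ -\ W_k(Kt)\ +\ \tfrac{1}{2}[W_k,W_k]_{Kt}\ +\ \epsilon(Kt),
\]
with $\epsilon(Kt)\to 0$ in probability; the $\tfrac{1}{2}[W_k,W_k]$ term appears for free as the It\^o correction to the logarithm. Your decomposition $f=zf-(z-1)f$ together with It\^o on $(Z^K-1)^2$ recovers the same three pieces by instead computing $\int_0^{Kt}(Z^K_s-1)^2\,ds\to m_2(1)t/(2|m'(1)|)$ and then Taylor-expanding $(z-1)f(z)$. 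Both reach the same joint limit for the exponent and $W_k(Kt)$; the paper's version is slightly more economical because it bypasses the separate ergodic computation for the squared fluctuations.

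The gap is in your uniform integrability step. Restricting to $\{T_\gamma\ge KT\}$ only bounds the \emph{integrand} $f(Z^K_s)$, not the \emph{integral} over $[0,Kt]$, whose length grows like $K$. In your own decomposition the term $-k\int_0^{Kt}(Z^K_s-1)f(Z^K_s)\,ds$ is non-negative (since $(z-1)f(z)\le 0$ near the equilibrium) and is \emph{not} deterministically bounded on $\{T_\gamma\ge KT\}$; it only converges in probability. So ``the only remaining unbounded piece is $e^{-kW_k(Kt)}$'' is incorrect, and an Azuma bound on $W_k$ alone cannot close the argument. The paper sidesteps this entirely by reverting the change of measure: since $d\mathbf{Q}^k/d\Prob = N^k_{Kt}=\exp\!\big(-k\!\int_0^{Kt}f\big)\prod_{i\le k}Z^{(i)}_{Kt}$, the Feynman--Kac factor cancels and one is left with bounding $\E_{Z_0}\big[\psi(Z^K_{Kt})e^{-\beta Z^K_{Kt}}\prod_{i\le k}Z^{(i)}_{Kt}\,\1_{\{\cdot>M\}}\big]$, which can be controlled using the exponential penalisation $e^{-\beta Z^K_{Kt}}$ and tightness of $\exp(k\!\int f)$ under $\Prob=\mathbf{Q}^0$. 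You should use this reversion trick rather than trying to produce exponential moment bounds directly under $\mathbf{Q}^k$.
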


\begin{proof}

First, by Proposition~\ref{prop:coupling_frozen} and the continuity of
$\psi$ at $z=1$, $\psi( Z^K_{Kt})\e^{\beta(Z^K_0 -Z^K_{Kt})} \to \psi(1)$
as $K \to \infty$ in distribution under $\mathbb{P}^k_{Z_0}$.

For the remaining term in the expectation, recall from
\eqref{eq:SDE_under_Q} that, under $\mathbb{P}^k_{Z_0}$, the density process
$({Z}^K_t)$ is solution to
\begin{equation*}
    d {Z}^K_s\ = \  \big({m}({Z}^K_s)-1\big)q({Z}^K_s)\big({Z}^K_s-k/K\big)ds + {m}_2({Z}^K_s)q({Z}^K_s)\frac{k}{K}ds + dW^{(k)}_s,
\end{equation*}
where $W^{(k)}_s$ is the stochastic integral defined in 
\eqref{eq:martingale_before_cuting_in_three_parts}.
Applying Itô's formula to the logarithm function (see \eg
\citet[Chapter II, Theorem 32]{protter_stochastic_2005}) we have
\begin{multline*}
    d\big(\log {Z}^K_s\big)\ =\ \big({m}({Z}^K_s)-1\big)q({Z}^K_s) \Big(1-\frac{k}{K {Z}^K_s}\Big)ds + {m}_2({Z}^K_s)q({Z}^K_s)\frac{k}{K{Z}^K_s}ds\\
   + \frac{dW^{(k)}_s }{{Z}^K_{s^-}}+ \bigg[\log\Big(1+\frac{\Delta {Z}^K_{s}}{{Z}^K_{s^-}}\Big)-\Big(\frac{\Delta {Z}^K_{s}}{{Z}^K_{s^-}}\Big)\bigg].
\end{multline*}
Then, integrating until time $Kt$ and rearranging the terms we obtain that
\begin{multline*}
    \int_0^{Kt} q({Z}^K_s)\big({m}({Z}^K_s)-1\big)ds\ =\   \log \Big(\frac{{Z}^K_{Kt}}{{Z}^K_0}\Big) +  \int_0^{Kt} ({m}({Z}^K_s)-1)q({Z}^K_s)\frac{ k }{K {Z}^K_s }ds\\
      - \int_0^{Kt} {m}_2({Z}^K_s)q({Z}^K_s)\frac{ k }{K {Z}^K_s }ds -  \int_0^{Kt}\frac{1}{{Z}^K_{s^-}} dW^{(k)}_s\\ + \sum_{s\leqslant Kt} \bigg[\Big(\frac{\Delta {Z}^K_{s}}{{Z}^K_{s^-}}\Big)-\log\Big(1+\frac{\Delta {Z}^K_{s}}{{Z}^K_{s^-}}\Big)\bigg].
\end{multline*} 
Define the remainder $\varepsilon(Kt)$ such that
\begin{equation}\label{eq:rest_bias}
    \int_0^{Kt} q({Z}^K_s)({m}({Z}^K_s)-1)ds\ = \ - k q(1)m_2(1) t - W^{(k)}_{Kt} + \frac{1}{2} \big[W^{(k)},W^{(k)}\big]_{Kt} + \varepsilon(Kt).
\end{equation}
We will show subsequently in Lemma~\ref{lem:rest_bias} that the remainder
satisfies $\varepsilon(Kt)\to 0$ in probability as $K\to\infty$, and we assume
that this holds within the current proof. 

By Lemma~\ref{lem:cv_martingale_jacod_shiryaev}, the process
$\big([W^{(k)},W^{(k)}]_{Kt}\big)_{t\geqslant 0}$ converges in probability to
$\big(q(1)m_2(1)t\big)_{t\geqslant 0}$ and $\big(W^{(k)}_{Kt}\big)_{t \geqslant 0}$ converges to
a Brownian motion with variance $q(1)m_2(1)$. Therefore, further using
that $\varepsilon(Kt) \to 0$ by Lemma~\ref{lem:rest_bias},
\[
    \int_0^{Kt} q({Z}^K_s)({m}({Z}^K_s)-1)ds\ 
    \longrightarrow\ 
    \sqrt{q(1)m_2(1)}B_t - q(1)m_2(1)\frac{2k-1}{2}t,
\] 
in distribution under $\mathbb{P}^k_{Z_0}$, as $K \to \infty$. 
Multiplying this expression by $k$ and taking an exponential, we obtain
furthermore that 
\begin{equation}\label{eq:cv_bias}
    \exp\Big(\int_0^{Kt} kq({Z}^K_s)\big({m}({Z}^K_s)-1\big)ds\Big)\ \underset{K\to\infty}{\overset{(d)}{\longrightarrow}}\ \exp\Big(-q(1)m_2(1)\mbinom{k}{2}t\Big)\, \mathcal{E}_t,
\end{equation}
where
\[
\mathcal{E}_t\ =\ \exp\Big(k\sqrt{q(1)m_2(1)}B_{t} -\frac{k^2}{2} 
q(1)m_2(1)t\Big)
\]
is the exponential martingale of the  Brownian motion with variance $k^2
q(1)m_2(1)$, and the convergence in distribution should as before be
understood as under $\mathbb{P}^k_{Z_0}$.

In order to obtain convergence of the expected value corresponding to \eqref{eq:base_case}, we need some additional uniform integrability. The test function $\psi$ being bounded, it suffices to treat the case $\psi=1$. Let $M>0$. Reverting the change of measure and using the exchangeability of the initial particles leads to
\begin{align*}  
\E^k_{Z_0}\bigg[\e^{-\beta Z^K_{Kt}}&\exp\Big(\int_0^{Kt} kq({Z}^K_s)({m}({Z}^K_s)-1)ds\Big)\ \1\Big(\exp\Big(\int_0^{Kt} kq({Z}^K_s)({m}({Z}^K_s)-1)ds\Big)>M\Big)\bigg]\\
 & =\ \E_{Z_0}\bigg[\e^{-\beta Z^K_{Kt}}\prod_{i =1}^{k}Z^{(i)}_{Kt}\cdot \1\Big(\exp\Big(\int_0^{Kt} kq({Z}^K_s)({m}({Z}^K_s)-1)ds\Big)>M\Big)\bigg]\\
  =\ &\frac{1}{(Z_0)_{k}}\E_{Z_0}\bigg[\e^{-\beta
 Z^K_{Kt}}\sum_{\substack{\bm u: u_1\neq\cdots\neq u_k\\ \bm u\in \mathcal{N}_0}} \prod_{i =1}^{k}Z^{(u_i)}_{Kt} \1\Big(\exp\Big(\int_0^{Kt} kq({Z}^K_s)({m}({Z}^K_s)-1)ds\Big)>M\Big)\bigg]\\
&\leqslant\ \frac{1}{(Z^K_0-k/K)^{k}}\E_{Z_0}\bigg[\e^{-\beta Z^K_{Kt}}(Z^K_{Kt})^k\ \1\Big(\exp\Big(\int_0^{Kt} kq({Z}^K_s)({m}({Z}^K_s)-1)ds\Big)>M\Big)\bigg]\\
& \leqslant\ \e^{-1}\Big(\frac{\beta}{Z^K_0-k/K}\Big)^{k} \mathbb{P}_{Z_0}\bigg(\exp\Big(\int_0^{Kt} kq({Z}^K_s)({m}({Z}^K_s)-1)ds\Big) >M\bigg).
\end{align*}
The convergence in distribution of the random variable
$\exp\big(\int_0^{Kt} kq({Z}^K_s)({m}({Z}^K_s)-1)ds\big)$ under
$\mathbb{P}=\mathbb{P}^0$ derived above implies its tightness. Hence, the upper
bound vanishes as $M\to\infty$, uniformly in $K$.
\end{proof}

\begin{lem}\label{lem:rest_bias}
   For all $k\geqslant 0$, for all sequence of initial conditions $(Z_0)_K$ satisfying $Z_0/K\to 1$ and $T>0$, the remainder term $\sup_{s\leqslant T}\varepsilon(Kt)$ introduced in \eqref{eq:rest_bias} converges in probability to zero under $\mathbb{P}^k_{Z_0}$, as $K\to\infty$.
\end{lem}

\begin{proof}
From the definition of $\varepsilon(Kt)$ we have
\begin{multline*}
    \big|\varepsilon(Kt)\big|\ \leqslant \ \underbrace{  \Big| \log \Big(\frac{{Z}^K_{Kt}}{{Z}^K_0}\Big)\Big|}_{A} \ +\ \underbrace{\frac{1}{K} \int_0^{Kt} \Big|({m}({Z}^K_s)-1)q({Z}^K_s)\frac{ k }{{Z}^K_s }\Big|}_{B}ds\\
      +\ \underbrace{\int_0^{Kt}\frac{k }{K}\Big|\frac{{m}_2({Z}^K_s)q({Z}^K_s)}{{Z}^K_s}-q(1)m_2(1)\Big|}_{C} ds\ +\ \underbrace{\Big|\int_0^{Kt}\Big[1-\frac{1}{{Z}^K_{s^-}}\Big] dW^{(k)}_s\Big|}_{D}\\
     +\ \underbrace{\Big|\sum_{s\leqslant Kt} \log\Big(1+\frac{\Delta {Z}^K_{s}}{{Z}^K_{s^-}}\Big)-\Big(\frac{\Delta {Z}^K_{s}}{{Z}^K_{s^-}}\Big)+\frac{1}{2}\big(\Delta {Z}^K_{s}\big)^2\Big|}_{E}.
\end{multline*}

Using the Skorohod representation theorem, we assume as before that the
convergence $Z^K_{tK}\to 1$ uniformly on $[0,T]$ holds almost-surely.
Then, by continuity of $m$, $m_2$ and $q$ at $1$ we directly have that
$A$, $B$ and $C$ converge almost-surely to $0$.
A Taylor series expansion of the logarithm to the second order yields
that, for $t\leqslant T$,
\begin{align}\nonumber
    E\ &= \ \Big|\sum_{s\leqslant Kt} \big(\Delta {Z}^K_{s}\big)^2\bigg[\frac{1}{2}\Big(1-\frac{1}{({Z}^K_{s^-})^2}\Big)\ +\ \frac{1}{({Z}^K_{s^-})^2}  R_{\log}\Big(\frac{\Delta {Z}^K_{s}}{{Z}^K_{s^-}}\Big)\bigg]\Big|\\\nonumber
    &\leqslant \ \sum_{s\leqslant Kt} \big(\Delta {Z}^K_{s}\big)^2\bigg(\frac{1}{2}\Big|1-\frac{1}{({Z}^K_{s^-})^2}\Big|\ +\ \frac{1}{({Z}^K_{s^-})^2}\Big| R_{\log}\Big(\frac{\Delta {Z}^K_{s}}{{Z}^K_{s^-}}\Big)\Big|\bigg)\\ \label{eq:control_E}
      &\leqslant\ \big[W^{(k)},W^{(k)}\big]_{KT}\bigg(\frac{1}{2}\sup_{s\leqslant KT}\ \big|1-(Z^K_{s^-})^{-2}\big|\ +\ \sup_{s\leqslant KT}\ \frac{1}{({Z}^K_{s^-})^2}\Big| R_{\log}\Big(\frac{\Delta {Z}^K_{s}}{{Z}^K_{s^-}} \Big)\Big|\bigg),
\end{align}
where the remainder $R_{\log}:x\mapsto x^{-2}(\log(1+x)-x+x^2/2)$ satisfies $\lim_{x\to 0}R_{\log}(x)=0$. By Lemma~\ref{lem:cv_martingale_jacod_shiryaev}, the quadratic variation $[W^{(k)},W^{(k)}]_{KT}$ converges in probability to a finite limit. Besides, by Proposition~\ref{prop:coupling_frozen} the population size $Z^K_{Ks}$ converges  almost-surely to $1$, uniformly on $[0,T]$ and there exists $\gamma>0$ such that the exit time  from \eqref{eq:stopping_times} satisfies $(T_{\gamma}\wedge KT)/K \to  T$ almost-surely. Re-using the argument, this uniform convergence implies that with high probability, the supremum of the jumps fulfills
\[
\sup_{s < Kt} |\Delta Z^K_s| \ \to \ 0.
\]
Therefore, by Slutsky's theorem the product \eqref{eq:control_E} converges to zero converges to $0$ in probability.

To derive the result, it remains to control $D$. In the definition of $D$,  the stochastic integral within the absolute values is a local martingale. When stopped at $T_{\gamma}$, its quadratic variation satisfies
\begin{align*}
   \big[D,D\big]_{Kt\wedge T_{\gamma}}\ &=\ \sum_{s\leqslant Kt \wedge T_{\gamma}} \big(1-(Z^K_s)^{-1}\big)^2\big|\Delta W^{(k)}_s\big|^2\\
   &\leqslant\    \big[ W^{(k)},W^{(k)}\big]_{KT\wedge T_{\gamma}}\sup_{s\leqslant KT \wedge T_{\gamma}}\big(1-(Z^K_{s})^{-1}\big)^2,
\end{align*}
which by Lemma~\ref{lem:cv_martingale_jacod_shiryaev} and convergence of the density converges in probability to zero. Moreover, its jumps are uniformly bounded by $(1+1/(1-\gamma))$. Thus, re-applying \citet[Chapter VIII, Theorem 3.11]{jacod-shiryaev} and Proposition~\ref{prop:coupling_frozen}, $D$ converges in probability to zero as $K\to\infty$.
Putting everything together, we finally obtain that for all $k\geqslant 0$, $\sup_{t\leqslant T}\varepsilon(Kt)$ converges to zero in probability under $\mathbb{P}^k_{Z_0}$.
\end{proof}

\section{Convergence of the genealogy}\label{sec:convergence}

In this final section, we begin by showing in
Proposition~\ref{prop:cv_moments_planaires} that the penalized planar
moments measures \eqref{eq:moments_planaires} admit a limit.
Moreover, we will see that this limit is a probability distribution
supported on the planar chronological forests with binary mergers which corresponds to the distribution of a Markov process. In Proposition~\ref{prop:deplanarization}, we
then identify this limiting distribution as that of the planarized Kingman
coalescent. Finally, we complete the proof of the main theorem by linking
the genealogies of a uniform sample from the interacting
population model of Section~\ref{sec:model} to the penalized planar
moments.

\subsection{Limit of the penalized moments}

For any planar coalescent process $\Pi\in \mathbb{G}^{k}$, we define
$\Pi/K\in \mathbb{G}^{k}$ to be the planar coalescent on $\{1,\ldots, k\}$ obtained after rescaling time
by $1/K$.

\begin{prop}[Limit of the moments]\label{prop:cv_moments_planaires} 
    Let $k\geqslant 1$, $\beta>0$ and $t>0$. Let $\varphi : \mathbb{G}^k \to \R$ 
    and $\psi : \mathbb{R}_+ \to \mathbb{R}$ be two bounded functionals with $\psi$ continuous at $1$.
    There exists a probability measure $\mathbf{\bar{M}}^{k,t}$ supported on planar $k$-coalescent with binary mergers such
    that if $(Z_0)_K$ is a deterministic sequence of initial conditions satisfying $Z_0/K\to 1$
    as $K\to\infty$, then 
    \begin{equation}\label{eq:def_limit_measure}
        \lim_{K\to\infty} \mathbf{M}^{k,Kt}_{Z_0}\big(\varphi(\cdot/K),\ \psi\big) 
        \ = \   
        \psi(1)\mathbf{\bar M}^{k,t}(\varphi). 
    \end{equation}
    Furthermore, let $k\geqslant 2$ and $\varphi$ be of the product form defined in
    \eqref{eq:product_form_functionals}, with $d=2$. That is  
    \begin{equation*}
    \forall \Pi\in\mathbb{G}^{k}, \ \ 
    \varphi(\Pi)\ \coloneqq\ \1\big(\tau_1<t,\  c_1=(i,2)\big)  \varphi_1(\tau_1)  \varphi_2\big(\Theta_{\tau_1}(\Pi)\big),
    \end{equation*}
    where $i\in\{1,\ldots, k-1\}$ and $\varphi_1,\, \varphi_2$ are bounded measurable functions.
    Then the following holds
    \begin{equation}\label{eq:limit_recurrence}
    \begin{dcases}
        \mathbf{\bar{M}}^{k,t}(\tau_1>t)\ =\ 
        \exp\Big(-q(1)m_2(1)\mbinom{k}{2}t\Big) \\
        \mathbf{\bar M}^{k,t}(\varphi)\ =\ \int_{0}^{t}\varphi_1(s)
        \mathbf{\bar M}^{k-1, s}(\varphi_2) \frac{q(1)m_2(1)}{k-1}\binom{k}{2}
        \exp\Big(-q(1)m_2(1)\mbinom{k}{2}s\Big) d s.
    \end{dcases}
    \end{equation}
\end{prop}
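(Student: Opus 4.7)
The plan is to proceed by strong induction on $k\ge 1$, and, inside each $k$, to run a secondary induction on the number of branching points of the planar coalescent on which $\varphi$ depends. The candidate limit $\mathbf{\bar M}^{k,t}$ is defined as the law of the standard Markov coalescent on $[k]$ with binary mergers: one waits an independent exponential time of rate $q(1)m_2(1)\binom{k}{2}$, picks uniformly one of the $k-1$ pairs of consecutive blocks to merge, and iterates on the $k-1$ remaining blocks. By construction this is a probability measure supported on binary-merger planar coalescents, and a direct computation shows that it satisfies both equations of \eqref{eq:limit_recurrence}. Hence it remains to establish the convergence $\mathbf{M}^{k,Kt}_{Z_0}(\varphi(\cdot/K),\psi)\to\psi(1)\mathbf{\bar M}^{k,t}(\varphi)$.

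The secondary base case $\varphi=\1(\tau_1>t)$ follows immediately from \Cref{prop:estimation_deltak_multitype}, since \eqref{eq:moments_planaires} combined with the base-case identity displayed in \Cref{thm:recursion_moment} makes $\mathbf{M}^{k,Kt}_{Z_0}(\psi)$ coincide with the Feynman--Kac expression treated there, whose limit is $\psi(1)\exp(-q(1)m_2(1)\binom{k}{2}t)$. For the inductive step I take a product-form $\varphi$ with first merger $(i,d)$, apply \eqref{eq:induction} at time $Kt$, and change variables $s=Ku$ to rewrite the moment as
\begin{multline*}
\mathbf{M}^{k,Kt}_{Z_0}(\varphi(\cdot/K),\psi)\ =\ \frac{\binom{k}{d}\,K}{(k-d+1)(Z_0-k+d-1)_{d-1}} \\
\times\int_0^t \varphi_1(t-u)\,\mathbf{M}^{k-d+1,Ku}_{Z_0}\bigl(\varphi_2(\cdot/K),\,\mathcal{M}^{(k,d),K(t-u)}_\cdot(\psi)\bigr)\,du.
\end{multline*}
The prefactor converges to $\binom{k}{2}/(k-1)$ when $d=2$ and decays like $K^{2-d}$ when $d\ge 3$. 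In the case $d=2$, I rewrite $\mathcal{M}^{(k,2),K(t-u)}_z(\psi)=q(z)m_{2,\beta/K}(z)\mathrm{e}^{\beta/K}\mathbb{E}\bigl[\mathbf{M}^{k,K(t-u)}_{zK+L^{(2)}_{\beta/K}(z)-1}(\psi)\bigr]$ and combine the outer base case at level $k$ with dominated convergence in the law of $L^{(2)}_{\beta/K}(z)$ to obtain $\mathcal{M}^{(k,2),K(t-u)}_z(\psi)\to q(1)m_2(1)\psi(1)\exp(-q(1)m_2(1)\binom{k}{2}(t-u))$ uniformly for $z$ in a neighborhood of $1$; applying the outer induction hypothesis at $k-1$ leaves to this uniformly convergent test function then identifies the limit with $\psi(1)\mathbf{\bar M}^{k,t}(\varphi)$ via \eqref{eq:limit_recurrence}.

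The main technical obstacle is the vanishing of the $d\ge 3$ contributions, since Assumption~\ref{hyp:moments} provides only a second moment and $m_{d,\beta/K}(z)$ is not a priori $O(K^{d-2})$ uniformly. The key estimate I would establish is that $K^{2-d}m_{d,\beta/K}(z)\to 0$ uniformly in $z\in(1-\epsilon,1+\epsilon)$, by splitting $\mathbb{E}[(L(z))_d\mathrm{e}^{-\beta L(z)/K}]$ along $\{L(z)\le AK\}$ and $\{L(z)>AK\}$ for a small auxiliary $A>0$. On the first set $(L)_d\le(AK)^{d-2}L^2$ gives a contribution $O(A^{d-2}K^{d-2}\sup_z\mathbb{E}[L(z)^2])$, which after multiplication by $K^{2-d}$ is $O(A^{d-2})$ and vanishes as $A\to 0$ for $d\ge 3$. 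On the second set the bound $L^{d-2}\mathrm{e}^{-\beta L/K}\le((d-2)K/(\beta\mathrm{e}))^{d-2}$ reduces the estimate to $K^{d-2}\sup_z\mathbb{E}[L(z)^2\1_{L(z)>AK}]$, which is $o(K^{d-2})$ by the uniform integrability \eqref{eq:UI_moment}. The same splitting underlies the dominated-convergence argument for $\mathcal{M}^{(k,2)}$, via the pointwise bound $\mathbf{M}^{k,Kt}_{Z_0}(\psi)\le C\|\psi\|_\infty\mathrm{e}^{\beta Z^K_0}$ (a consequence of $\mathrm{e}^{-\beta x}x^k\le(k/(\beta\mathrm{e}))^k$ and $(Z_0)_k\gtrsim K^k$) together with the identity $\mathbb{E}[\mathrm{e}^{\beta L^{(2)}_{\beta/K}(z)/K}]=m_2(z)/m_{2,\beta/K}(z)=O(1)$. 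Finally, convergence for product-form functionals extends to arbitrary bounded continuous $\varphi:\mathbb{G}^k\to\R$ because product-form cylinders generate the Borel $\sigma$-algebra on $\mathbb{G}^k$ and the rescaled moment measures are uniformly bounded, so a Portmanteau-type argument closes the proof.
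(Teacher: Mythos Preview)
Your outline tracks the paper's induction up to the point where you treat the multiple-merger contributions, but the argument you give for $d\ge 3$ has a genuine gap. The estimate $K^{2-d}m_{d,\beta/K}(z)\to 0$ is correct; however it does not control
\[
K^{2-d}\,\mathcal{M}^{(k,d),K(t-u)}_z(\psi)
= K^{2-d}\,q(z)\,\mathrm{e}^{\beta/K}\,
m_{d,\beta/K}(z)\,\E\bigl[\mathbf{M}^{k,K(t-u)}_{zK+L^{(d)}_{\beta/K}(z)-1}(\psi)\bigr].
\]
The very bound you invoke for the inner factor, $\mathbf{M}^{k,s}_{Z_0'}(\psi)\le C\lVert\psi\rVert_\infty\,\mathrm{e}^{\beta Z_0'^K}$, yields
\[
\E\bigl[\mathbf{M}^{k,s}_{zK+L^{(d)}_{\beta/K}-1}(\psi)\bigr]\ \le\ C\,\mathrm{e}^{\beta z}\,
\E\bigl[\mathrm{e}^{\beta L^{(d)}_{\beta/K}/K}\bigr]
\ =\ C\,\mathrm{e}^{\beta z}\,\frac{m_d(z)}{m_{d,\beta/K}(z)},
\]
so after multiplying by $m_{d,\beta/K}(z)$ the discount cancels exactly and you are left with $K^{2-d}m_d(z)$, where $m_d(z)=\E[(L(z))_d]$ may be infinite under Assumption~\ref{hyp:moments}. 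In short, the penalty $\mathrm{e}^{-\beta L/K}$ in $m_{d,\beta/K}$ is undone by the $\mathrm{e}^{\beta Z_0'^K}$ hidden in the bound on $\mathbf{M}$, and no splitting of $m_{d,\beta/K}$ alone can rescue this.

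The paper circumvents the difficulty by an indirect total-mass argument. One first notes that $\mathbf{M}^{k,Kt}_{Z_0}(1,1)=\E_{Z_0}[\mathrm{e}^{\beta(Z_0^K-Z_{Kt}^K)}(Z_{Kt})_k]/(Z_0)_k\to 1$, which follows from $Z_{Kt}^K\to 1$ in probability and bounded convergence. One then applies Fatou's lemma to each $d=2$ term (using precisely the convergence of $\mathcal{M}^{(k,2)}$ that you describe) and checks that the base case plus these lower bounds already sum to~$1$. Since every term in the decomposition
\[
\mathbf{M}^{k,Kt}_{Z_0}(1,1)\ =\ \mathbf{M}^{k,Kt}_{Z_0}\bigl(\1(\tau_1>Kt),1\bigr)\ +\ \sum_{d\ge 2}\sum_{i}\mathbf{M}^{k,Kt}_{Z_0}(\varphi_{(i,d)},1)
\]
is non-negative, this forces all $d\ge 3$ contributions to vanish and turns the Fatou inequality into an equality. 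The latter is exactly the uniform integrability needed to pass to the limit inside the time integral for $d=2$; your sketch asserts this passage via dominated convergence without supplying a dominating function, and the only route available under a bare second-moment assumption appears to be this mass-balance argument.
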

We start with a lemma before proceeding to the proof. Recall the notation $\mathcal{M}^{(k,d),t}_z$ from
\eqref{eq:psi_tilde_recursive_formula}.

\begin{lem}\label{cor:cv_m_cal_psi}
Let $\psi$ be bounded, nonnegative and continuous at 1, $k\geqslant 1$ and 
$t>0$. For any sequence $(z_K)_K$ such that $z_K\to 1$ as $K\to\infty$, 
\begin{equation*}
    \lim_{K\to\infty}{\cal M}^{(k,2),Kt}_{z_K}(\psi)\ =\ \psi(1)q(1)m_2(1)\exp\Big(-q(1)m_2(1)\mbinom{k}{2}t\Big).
\end{equation*}
\end{lem}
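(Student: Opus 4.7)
The plan is to unpack the definition of $\mathcal{M}^{(k,2),Kt}_{z_K}(\psi)$ from \eqref{eq:psi_tilde_recursive_formula}, dispose of the deterministic prefactors, and recognise the remaining expectation as a base-case penalized moment to which \Cref{prop:estimation_deltak_multitype} applies. Setting $L_K \coloneqq L^{(2)}_{\beta/K}(z_K)$, one has
\[
    \mathcal{M}^{(k,2),Kt}_{z_K}(\psi)\ =\ q(z_K)\, m_{2,\beta/K}(z_K)\, e^{\beta/K}\, \E\bigl[\mathbf{M}^{k,Kt}_{z_K K + L_K - 1}(\psi)\bigr].
\]
The first three factors are handled directly: continuity of $q$ at $1$ gives $q(z_K)\to q(1)$ and $e^{\beta/K}\to 1$, while $m_{2,\beta/K}(z_K)=\E[L(z_K)(L(z_K)-1)\,e^{-\beta L(z_K)/K}]$ converges to $m_2(1)$ by dominated convergence, using the uniform second-moment bound of \Cref{hyp:moments} and continuity of $m_2$ at $1$.

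The real work is showing that the expectation converges to $\psi(1)\exp\bigl(-q(1)m_2(1)\binom{k}{2}t\bigr)$. First I would check that the biased variable satisfies $L_K/K \to 0$ in probability, via
\[
    \Prob(L_K > \epsilon K)\ \leq\ \frac{\E[L(z_K)^2\,\mathbb{1}_{L(z_K)>\epsilon K}]}{m_{2,\beta/K}(z_K)},
\]
where the numerator vanishes by \Cref{hyp:moments} and the denominator tends to $m_2(1)>0$. Hence the random initial density $(z_K K + L_K-1)/K$ tends to $1$ in probability. For any \emph{deterministic} sequence $Z_0^K/K\to 1$, the base-case identity in \eqref{eq:induction} together with \Cref{prop:estimation_deltak_multitype} yields $\mathbf{M}^{k,Kt}_{Z_0}(\psi) \to \psi(1)\exp\bigl(-q(1)m_2(1)\binom{k}{2}t\bigr)$.

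The main obstacle is to push this limit through the outer expectation over the random $L_K$. I would split along $A_K \coloneqq \{L_K \le \epsilon K\}$ for a small fixed $\epsilon$. On $A_K$ the starting density stays in a fixed neighborhood of $1$, and the proofs of \Cref{prop:coupling_frozen} and \Cref{prop:estimation_deltak_multitype} are uniform over such starting points (the exit-time estimate in \Cref{lem:mdp_smalllem} depends only on $\gamma$, and the uniform-integrability step at the end of the proof of \Cref{prop:estimation_deltak_multitype} is independent of the starting point), so one obtains uniform convergence on $A_K$. On $A_K^c$, I would use the crude bound obtained from the base-case formula by maximising $y \mapsto e^{-\beta y}y^k$,
\[
    \bigl|\mathbf{M}^{k,Kt}_{Z_0}(\psi)\bigr|\ \le\ \|\psi\|_\infty\, k!\, \frac{K^k}{(Z_0)_k}\, \Bigl(\frac{k}{\beta e}\Bigr)^{\!k}\, e^{\beta Z_0/K}.
\]
Plugging in $Z_0 = z_K K + L_K -1$, the factor $K^k/(Z_0)_k$ is bounded by $(2/\epsilon)^k$ on $A_K^c$, and the $e^{\beta L_K/K}$ factor cancels against the $e^{-\beta L_K/K}$ in the biased law, leaving
\[
    \E\bigl[\,|\mathbf{M}^{k,Kt}_{z_K K + L_K-1}(\psi)|\,\mathbb{1}_{A_K^c}\bigr]\ \leq\ C_\epsilon\, \frac{\E[L(z_K)(L(z_K)-1)\,\mathbb{1}_{L(z_K)>\epsilon K}]}{m_{2,\beta/K}(z_K)},
\]
which again vanishes by \Cref{hyp:moments}. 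Combining the four limits and letting $\epsilon \to 0$ concludes the proof.
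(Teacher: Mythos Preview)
Your proposal is correct and follows the same route as the paper: dispose of the prefactors $q(z_K)$, $e^{\beta/K}$, $m_{2,\beta/K}(z_K)$; show that $L_K/K\to 0$ in probability so that the random starting density tends to $1$; invoke \Cref{prop:estimation_deltak_multitype} on the base-case moment; and finally justify passing the limit through the outer expectation over $L_K$.

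One small caveat: your appeal to \emph{uniform} convergence of \Cref{prop:estimation_deltak_multitype} over starting densities in a neighbourhood of $1$ is stronger than what has been established. The paper instead observes that $\mathbf{M}^{k,Kt}_{Kz_K+L_K-1}(\psi)$, being a deterministic function of the integer $Kz_K+L_K-1$, converges \emph{in probability} to the limiting constant (via a subsequence argument using \Cref{prop:estimation_deltak_multitype} along almost-surely convergent subsequences of the starting density), and then upgrades to convergence of the expectation by a crude pointwise bound coming from the definition of $\mathbf{M}^{k,t}_{Z_0}$. On your event $A_K$ this is exactly bounded convergence, so your split works without any uniformity claim. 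Your handling of $A_K^c$, exploiting the cancellation of the $e^{\beta L_K/K}$ factor in the bound against the $e^{-\beta L_K/K}$ penalisation built into the law of $L^{(2)}_{\beta/K}$, is correct and in fact more carefully spelled out than the paper's final displayed inequality, which as written is only for the deterministic starting point $Kz_K$.
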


\begin{proof}
    First, for any $A > 0$
    \begin{align*}
        \big|m^{\beta/K}_2(z_K) - m_2(z_K)\big| 
      \  & =\
        \E\big[ \big(L(z_K)\big)_2 \big( 1 - e^{-\beta L(z_K) / K}\big) \big] \\
        & \leqslant\
        \sup_{z \in (1-\varepsilon, 1+\varepsilon)} 
        \Big( \E\big[ \big(L(z_K)\big)_2 \1_{\{L(z_K) > A\}} \big] \Big)
        +
        A^2 \E\big[ 1- e^{-\beta L(z_K) / K} \big].
    \end{align*}
    The right-hand side vanishes by letting $K \to \infty$ first (and
    noting that $\big(L(z_K)\big)_K$ is tight, since it has uniformly bounded
    second moment) and then $A \to \infty$ using \eqref{eq:UI_moment}.
    Therefore, since $m_2(z)$ is continuous at $z=1$, we deduce
    that 
    \[
        \lim_{K \to \infty} m_{2, \beta/K}(z_K)\ =\ m_2(1).
    \]
    Similarly,
    \[
        \mathbb{P}\big(L_{2}^{\beta/K}(z_K) \geqslant A\big) \ \leqslant\
        \frac{1}{m^{\beta/K}_2(z_K)} \E\Big[ \big(L(z_K)\big)_2 \1_{\{ L(z_K) \geqslant
        A\}} \Big].
    \]
    The right-hand side vanishes uniformly in $K$ as $A \to \infty$ by
    \eqref{eq:UI_moment} and thus $\big(L^{\beta/K}_2(z_K)\big)_K$ is tight and 
    \[
        z_K + \frac{L^{\beta/K}_2(z_K)-1}{K}
      \  \underset{K\to\infty}{\longrightarrow}\ 1
    \]
    in probability. We can therefore apply
    Proposition~\ref{prop:estimation_deltak_multitype}, yielding that 
    \begin{equation*}
        \lim_{K\to\infty} q(z_K) m^{\beta/K}_2(z_K) \e^{\beta/K}
        \mathbf{M}^{k,Kt}_{K z_K+L^{\beta/K}_2(z_K)-1}(\psi) 
        \ =\ \psi(1)q(1)m_2(1)\exp\Big(-q(1)m_2(1)\mbinom{k}{2}t\Big)
    \end{equation*}
    holds in probability. 
    
    To complete the proof, we start by noting that, for $K$ large
    enough we have $z_K \in (1/2, 2)$ and that by the definition of the base case moment in \eqref{eq:moments_planaires}, the following upper bound holds:
    \[
     \mathbf{M}^{k,Kt}_{K z_K}(\psi)\ \leqslant\ \frac{\lVert\psi\rVert
        k!\, \sup_{z > 0} (\e^{-\beta z} z^k)}
        {\inf_{z \in (1/2,2)} (\e^{-\beta z} z^k)}.
    \]
    That is, using in addition that for all $z$ we have $\E\big[\e^{\beta L_2^{\beta/K}(z)/K}\big]= m_2(z)/ m^{\beta/K}_2(z)<\infty$, the random variable $\mathbf{M}^{k,Kt}_{K z_K+L^{\beta/K}_2(z_K)-1}(\psi)$ is uniformly integrable.
\end{proof}

\begin{proof}[Proof of Proposition~\ref{prop:cv_moments_planaires}]
If $\varphi = \1(\tau_1 > t)$ is the indicator that no coalescence events
occur before time $t$, by Theorem~\ref{thm:recursion_moment}
\[
    \mathbf{M}_{Z_0}^{k,Kt}\big(\varphi(\cdot/K), \psi\big)\ = \
    \e^{\beta Z^K_0}\E^k_{Z_0}\bigg[ 
        \psi(Z^K_{Kt})\e^{-\beta Z^K_{Kt}}\exp\Big(k\int_0^{Kt} q(Z^{K}_s) 
        \big(m(Z^K_s)-1\big) ds \Big)
    \bigg].
\]
Proposition~\ref{prop:estimation_deltak_multitype} entails that
\begin{equation} \label{eq:limit_base_case}
    \lim_{K \to \infty} \mathbf{M}_{Z_0}^{k,Kt}\big(\varphi(\cdot/K), \psi\big) 
  \  =\ \psi(1) \exp\Big(-q(1)m_2(1)\mbinom{k}{2}t\Big)
\end{equation}
which is a reformulation of \eqref{eq:def_limit_measure}. Since there are
no coalescence events in a tree with a single leaf, this also proves the
result for $k=1$.

Now, let $\varphi$ be of the product form \eqref{eq:product_form_functionals}.
We will prove the result by induction and assume that, for all $p < k$, 
\eqref{eq:def_limit_measure} holds and that the limiting probability
measure $\mathbf{\bar{M}}^{p,t}$ is supported on binary genealogies. Rescaling time by $K$ in
Theorem~\ref{thm:recursion_moment} shows that 
\begin{multline}\label{eq:rec_before_limit}
    \mathbf{M}^{k,Kt}_{Z_0}\big(\varphi( \cdot/K),\psi\big)
    \ =\ 
    \frac{1}{k-d+1}\binom{k}{d}\int_0^{t} \varphi_1(t-s)\frac{K}{(Z_0-k+d-1)_{d-1}}\\
    \times\mathbf{M}^{k-d+1,Ks}_{Z_0}\Big(
        {\varphi}_2(\cdot/K),\ \mathcal{M}^{(k,d),K(t-s)}_{\cdot}(\psi)
    \Big) ds.
\end{multline}
We will prove the result by letting $K \to \infty$ in this identity.
This requires sufficient integrability, which is the delicate step of
the proof.

For $d = 2$ and after re-scaling time by $K$, $\mathbf{M}^{k-1,Ks}_{Z_0}$ is a measure on
$\mathbb{G}^{k-1} \times \R_+$. By our induction hypothesis, its second
marginal (on $\R_+$) converges weakly to a Dirac mass at $z=1$.
Lemma~\ref{cor:cv_m_cal_psi} and an adaptation to finite measures of
the continuous mapping theorem from \citet[Theorem
5.27]{kallenberg_foundations_2021} ensure that the pushforward of
$\mathbf{M}^{k-1,Ks}_{Z_0}$ by the map 
\[
    (\Pi,\, z)\ \longmapsto\ \mathcal{M}_z^{(k,2),K(t-s)}(\psi)
\]
converges weakly to a Dirac mass at $\psi(1)q(1)m_2(1) \exp(
-q(1)m_2(1)\binom{k}{2}(t-s))$. Therefore, by Fatou's lemma we obtain that
\begin{equation} \label{eq:fatou_1}
    \liminf_{K\to\infty}\ \mathbf{M}^{k-1,Ks}_{Z_0}\Big(
    1,\ \mathcal{M}^{(k,2),K(t-s)}_{\cdot}(1)\Big) 
    \ \geqslant\ 
    q(1)m_2(1)\exp\Big(-q(1)m_2(1)\mbinom{k}{2}(t-s)\Big).
\end{equation}
Let us now denote by 
\[ 
    \varphi_{(i,d)} : \Pi\ \longmapsto\ \1\big(\tau_1<Kt,\ c_1=
    (i,d)\big),\quad 2\leqslant d\leqslant k,\ i\leqslant  k-d+1
\]
the indicator that at least one coalescence event occurs in the genealogy
and that the last one corresponds to merging the adjacent lineages
labeled from $i$ to $i+d-1$ together, in the notation of
Section~\ref{sec:marked_forests}. 

Decomposing according to the last
merger, 
\begin{align*}
    \mathbf{M}^{k,Kt}_{Z_0}(1,1)\ 
    &=\ \sum_{d=2}^k\sum_{i=1}^{k-d+1} 
    \mathbf{M}^{k,Kt}_{Z_0}(\varphi_{(i,d)},1)
    + \mathbf{M}^{k,Kt}_{Z_0}(1) \\
    &\geqslant\ 
    \sum_{i=1}^{k-1} 
    \mathbf{M}^{k,Kt}_{Z_0}(\varphi_{(i,2)},1)
    + \mathbf{M}^{k,Kt}_{Z_0}(1).
\end{align*}
Combining this inequality with \eqref{eq:limit_base_case}, \eqref{eq:rec_before_limit}  and 
\eqref{eq:fatou_1}, we obtain
\begin{align*}
   \liminf_{K \to \infty}\ 
   & \sum_{i=1}^{k-1} 
    \mathbf{M}^{k,Kt}_{Z_0}(\varphi_{(i,2)},1)
    + \mathbf{M}^{k,Kt}_{Z_0}(1)\\
   & \geqslant\  
        \frac{1}{k-1} \binom{k}{2} 
        \sum_{i=1}^{k-1} \int_0^t 
        q(1)m_2(1)\exp\Big(-q(1)m_2(1)\mbinom{k}{2}(t-s)\Big) ds 
        + \exp\Big(-q(1)m_2(1)\mbinom{k}{2}t\Big) \\
    &=\ \Big( 1-\exp\Big(-q(1)m_2(1)\mbinom{k}{2}t\Big) \Big)
        + \exp\Big(-q(1)m_2(1)\mbinom{k}{2}t\Big)  \\
    &=\ 1.
\end{align*}
On the other hand, since $Z_{Kt}^K \to 1$ by
Proposition~\ref{prop:coupling_frozen}, the continuous mapping
theorem shows that 
\[
   \mathbf{M}^{k,Kt}_{Z_0}(1,1)
  \ =\ 
  \frac{1}{(Z_0)_k} \E_{Z_0}\Big[\e^{\beta(Z^K_0-Z^K_{Kt})} (Z_{Kt})_k \Big]
   \underset{K\to\infty}{\longrightarrow} 1.
\]
This allows us to deduce two facts. First, for all $d > 2$,
\[
    \lim_{K \to \infty} \mathbf{M}^{k,Kt}_{Z_0}(\varphi_{(i,d)},1)\ =\ 0,
\]
so that any possible limit of $(\mathbf{M}^{k,Kt}_{Z_0})_K$ is supported
on binary genealogies. Second, for $d=2$,
\[
    \lim_{K \to \infty}\ \frac{1}{k-1} \binom{k}{2} \int_0^t 
    \mathbf{M}^{k-1,Ks}_{Z_0}\Big(1,\ \mathcal{M}^{(k,2),K(t-s)}_{\cdot}(1)\Big) ds
    \ =\ \frac{1}{k-1}\big( 1 - \e^{-q(1)m_2(1) \binom{k}{2} t} \big).
\]
From there, the Vitali convergence theorem (see 
\citet[Lemma~22.7]{schilling_measures_2017} with $p=1$) gives that the pushforward
of $ds \otimes \mathbf{M}^{k-1,Ks}_{Z_0}$ by
\[
    (s, \Pi, z)\ \longmapsto\ \mathcal{M}^{(k,2), K(t-s)}_z(1)
\]
is a uniformly integrable collection of measures (on $\R_+$). In turn,
letting $K \to \infty$ in \eqref{eq:rec_before_limit} and using the
dominated convergence theorem, for any continuous bounded $\varphi_1$,
$\varphi_2$, and $\psi$ we have
\begin{multline*}
    \lim_{K \to \infty}\ \frac{1}{k-1} \binom{k}{2} \int_0^t \varphi_1(t-s)
    \mathbf{M}^{k-1,Ks}_{Z_0}\Big(\varphi_2(\cdot/K),\, 
    \mathcal{M}^{(k,2),K(t-s)}_{\cdot}(\psi)\Big) ds \\
   \ =\ \frac{\psi(1) q(1) m_2(1)}{k-1} \binom{k}{2}
    \int_0^t \varphi_1(t-s) \mathbf{\bar{M}}^{k-1,t-s}(\varphi_2)
    \e^{-q(1)m_2(1)\binom{k}{2}(t-s)} ds.
\end{multline*}
This completes the induction, and the proof.
\end{proof}

\subsection{Completing the proof}

We define the planar $k$-Kingman coalescent with rate $r>0$ as the
process on planar partitions of $\{1,\ldots, k\}$ such that the process starts from
the partition into singletons and, when the process has $p$ blocks, any
consecutive pair of blocks coalesces at rate
 \[
\frac{r}{p-1} \binom{p}{2}.
\] 
Note that, since there are $p-1$ consecutive pairs of
adjacent blocks among $p$ ordered blocks, the total coalescent rate is
consistent with that of the (plain) Kingman coalescent with rate
$r$. In fact, we have the following result which corresponds to \citet[Lemma~3.1]{Foutel-Rodier:UMS}.

\begin{lem}{\cite{Foutel-Rodier:UMS}.}\label{lem:felix}
    Let $(\widetilde{\Pi}^k_t, t\geqslant 0)$ be the planar $k$-Kingman
    coalescent with rate $r$. Let $\sigma$ be an independent random
    uniform permutation of $\{1,\ldots, k\}$. Then $(\sigma(\widetilde{\Pi}^k_t))_{t \geqslant 0}$
    is identical in law to the (plain) $k$-Kingman coalescent with
    rate $r$. Here, $\sigma(\widetilde{\Pi}_t)$ is the partition
    obtained by permuting the indices of $\widetilde\Pi_t$ by $\sigma$.
\end{lem}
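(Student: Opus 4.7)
The plan is to verify that $(\sigma(\widetilde{\Pi}^k_t))_{t\ge 0}$ is a time-homogeneous Markov process on the unordered partitions of $[k]$ whose infinitesimal generator agrees with that of the Kingman coalescent of rate $r$, and which starts from the partition of $[k]$ into singletons. Since the singleton partition is invariant under relabelling by $\sigma$, and since a pure-jump Markov process is characterized by its initial distribution, its holding rates, and its jump kernel, this identification is enough.

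The total merger rate of $\widetilde{\Pi}^k$ when it has $p$ blocks is $(p-1)\cdot\frac{r}{p-1}\binom{p}{2}=r\binom{p}{2}$, which matches that of the Kingman coalescent, so the only nontrivial check is that the identity of the pair of blocks that merges in $\sigma(\widetilde{\Pi}^k)$ is uniformly distributed over the $\binom{p}{2}$ unordered pairs of current blocks. I would establish this through the following stronger invariant, proved by induction on the number of merger events already performed: \emph{conditionally on the sequence of unordered partitions up to and including time $t$, the ordering of the blocks of $\sigma(\widetilde{\Pi}^k_t)$ read off from $\widetilde{\Pi}^k_t$ via $\sigma$ is uniform over the $p!$ orderings of the $p$ current blocks}.

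The base case $t=0$ is immediate from the uniformity of $\sigma$. For the inductive step, the next merger in $\widetilde{\Pi}^k$ affects a uniformly chosen pair of consecutive positions, so by the invariant the pair of blocks that merges in $\sigma(\widetilde{\Pi}^k)$ is the pair obtained by first sampling a uniform ordering of the $p$ blocks and then selecting a uniform adjacent position; a short double-counting shows this yields the uniform distribution on the $\binom{p}{2}$ unordered pairs of blocks, as required. To propagate the invariant after the merger, one observes that among the $2(p-1)!$ orderings of the $p$ blocks in which two prescribed blocks are adjacent, collapsing those two blocks into a single block produces each of the $(p-1)!$ orderings of the resulting $(p-1)$-block partition exactly twice; hence the post-merger ordering remains uniform conditionally on the new unordered partition.

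The main, and rather mild, obstacle is the careful propagation of the uniform-ordering invariant across a merger: one must be precise that the conditioning at each step is on the \emph{unordered} past and not on the planar past, and the combinatorial symmetry between the two orientations of the merging pair is what makes the invariant stable. Once this is secured, the Markov property of $(\sigma(\widetilde{\Pi}^k_t))_{t\ge 0}$ and the identification of its jump kernel with that of the Kingman coalescent are immediate, completing the proof.
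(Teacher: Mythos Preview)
Your argument is correct. The invariant you propose---that conditionally on the unordered history, the induced ordering on the current blocks is uniform---is exactly the right bookkeeping device, and your combinatorial checks (that a uniform ordering composed with a uniform adjacent-pair choice yields a uniform unordered pair, and that collapsing an adjacent pair preserves uniformity of the residual ordering) are sound. The Markov property of the relabelled process then follows because the planar dynamics depend only on the current block count, and the invariant decouples the hidden ordering from the observed unordered past.

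As for comparison with the paper: there is nothing to compare against. The paper does not prove this lemma; it is stated with a citation to \cite[Lemma~3.1]{Foutel-Rodier:UMS} and used as a black box in the proof of Theorem~\ref{thm:main}. Your write-up therefore supplies a self-contained justification that the paper omits. If you wanted to tighten the exposition, you could phrase the invariant as a statement about the joint chain $(\text{ordering}, \text{unordered partition})$ being Markov with the ordering component always uniform given the second, which makes the ``conditioning on the unordered past, not the planar past'' point automatic rather than something to flag as a caveat.
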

The limit object $\mathbf{\bar M}^{k,t}$ was introduced as a probability measure on planar $k$-coalescent processes, described by a sequence of coalescent times and events.
\begin{prop}\label{prop:deplanarization}
    The measure $\mathbf{\bar M}^{k,t}$ is equal to the distribution of
    the planar $k$-Kingman coalescent with rate $m_2(1) q(1)$, restricted
    to $[0, t]$. 
\end{prop}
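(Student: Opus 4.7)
The plan is to proceed by induction on $k$, using the recursive characterization \eqref{eq:limit_recurrence} of $\mathbf{\bar M}^{k,t}$ together with the Markov (first-merger) description of the planar Kingman coalescent. The base case $k = 1$ is immediate: both measures are concentrated on the unique trivial coalescent on $[1]$, which has no mergers in $[0,t]$.

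For the inductive step, I assume the result for $k-1$ at every horizon $s \in [0, t]$ and write $r \coloneqq m_2(1)q(1)$. By \Cref{prop:cv_moments_planaires}, $\mathbf{\bar M}^{k,t}$ is supported on planar coalescents with only binary mergers, so it suffices to match the joint law of the first merger $(\tau_1,\pi_1)$ and the conditional law of the residual $\Theta_{\tau_1}(\Pi)$. The first equation of \eqref{eq:limit_recurrence} gives
\[
    \mathbf{\bar M}^{k,t}(\tau_1 > t)\ =\ e^{-r\binom{k}{2} t},
\]
which matches the no-merger probability under the planar $k$-Kingman coalescent, since with $k$ blocks the total coalescence rate is $(k-1)\cdot \frac{r\binom{k}{2}}{k-1} = r\binom{k}{2}$. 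The second equation of \eqref{eq:limit_recurrence}, applied to product-form $\varphi$ with $d=2$, $\varphi_1,\varphi_2$ arbitrary bounded continuous and $i \in [k-1]$ arbitrary, identifies the joint law of $(\tau_1,\pi_1,\Theta_{\tau_1}(\Pi))$: each adjacent pair $(i,2)$ contributes merger density $\frac{r\binom{k}{2}}{k-1}\, e^{-r\binom{k}{2}\tau}$ in $\tau = \tau_1$, so $\tau_1$ is exponential with rate $r\binom{k}{2}$ and, given $\tau_1$, the merging pair is uniform on $[k-1]$; conditional on this, the residual coalescent is governed by $\mathbf{\bar M}^{k-1,\cdot}$ at the appropriate remaining horizon, which by the inductive hypothesis is the planar $(k-1)$-Kingman coalescent with rate $r$. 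This is precisely the Markov decomposition of the planar $k$-Kingman coalescent at its first coalescence event.

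It remains to upgrade this first-merger identification into equality of the full distributions. Since any binary planar coalescent on $[0,t]$ is encoded by its finite sequence of mergers $(\pi_i,\tau_i)_{i\le b}$, iterating \eqref{eq:limit_recurrence} along this sequence determines every finite-dimensional distribution; a standard monotone class argument then extends equality on product-form functionals to equality on all bounded measurable test functions, yielding $\mathbf{\bar M}^{k,t}$ equals the law of the planar $k$-Kingman coalescent of rate $r$ restricted to $[0,t]$.

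I do not expect a genuine technical obstacle at this stage. The substantive analytic work (convergence of the density process under $\mathbf{Q}^k$, control of the Feynman--Kac term, and passage to the limit in the recursive moment formula) has already been carried out in \Cref{sec:calcul_sto,sec:convergence}; what remains here is the recognition step identifying the recursion \eqref{eq:limit_recurrence} as the first-merger decomposition of a familiar Markov process. The only mildly delicate point is checking that \Cref{prop:cv_moments_planaires} really does supply the recursion for \emph{every} $i \in [k-1]$ with arbitrary continuous $\varphi_1,\varphi_2$, which is needed to read off both the uniform law of the pair and the correct factorization of the residual coalescent; both follow directly from the product-form statement of \Cref{prop:cv_moments_planaires}.
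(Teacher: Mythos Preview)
Your proposal is correct and follows essentially the same approach as the paper: both identify the recursion \eqref{eq:limit_recurrence} as the first-merger decomposition of the planar $k$-Kingman coalescent (exponential first coalescence time with rate $q(1)m_2(1)\binom{k}{2}$, uniform choice among the $k-1$ adjacent pairs, residual governed by the $(k-1)$-coalescent). The paper's proof is simply a terse version of yours---it states the identification of the transitions in two sentences---whereas you have made the induction on $k$ and the monotone-class extension explicit.
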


\begin{proof}
    Looking at \eqref{eq:limit_recurrence} in 
    Proposition~\ref{prop:cv_moments_planaires}, we see that the first
    jump of the coalescent under $\mathbf{\bar{M}}^{k,t}$ occurs after an
    exponential time with rate $q(1)m_2(1) \mbinom{k}{2}$. Conditional on
    that jump time, each of the $k-1$ pairs of consecutive blocks
    coalesces with equal probability. This is the description of the
    transitions of the planar $k$-Kingman coalescent.
\end{proof}

\begin{proof}[Proof of Theorem~\ref{thm:main}]
The convergence of the population size stated in the theorem 
is a consequence of Proposition~\ref{prop:coupling_frozen}.

For the second point, let $t > 0$. By Proposition~\ref{prop:coupling_frozen}, one can find $\gamma>0$ such that $\mathbb{P}(Z^K_{Kt} \in \mathscr{B}_\gamma) \to 1$ as $K \to \infty$. Ergo,
\begin{align*}
    \E_{Z_0}\Big[\varphi(\Pi^{k,K}) \, \big\vert\,Z_{Kt}\geqslant k\Big] \ 
    &= \ \E_{Z_0}\Big[\varphi(\Pi^{k,K}) \1(Z^K_{Kt} \in 
    \mathscr{B}_\gamma)\Big] + o(1) \\
    &= \ \E_{Z_0}\bigg[ 
        \frac{\1(Z^K_{Kt} \in \mathscr{B}_\gamma)}{\big(Z_{Kt}\big)_{k}} 
        \sum_{\substack{\bm v : v_1<\cdots<v_{k} \\ \bm v \in{\cal N}_{Kt}}} 
        \sum_{\sigma\in\mathfrak{S}_k} \varphi\circ\sigma(\Pi^{{\bm v}}/K)
    \bigg] + o(1) \\
    &= \ \frac{1}{K^k} \E_{Z_0}\bigg[ 
        \frac{\1(Z^K_{Kt} \in \mathscr{B}_\gamma)}{\big(Z^K_{Kt}\big)^k} 
        \sum_{\substack{\bm v : v_1<\cdots<v_{k} \\ \bm v \in{\cal N}_{Kt}}} 
        \sum_{\sigma\in\mathfrak{S}_k} \varphi\circ\sigma(\Pi^{{\bm v}}/K)
    \bigg] + o(1),
\end{align*}
where $\mathfrak{S}_k$ is the set of permutations of $\{1,\ldots, k\}$. Introducing 
\[ 
    \psi_{\gamma}:z\ \longmapsto\ \frac{\e^{\beta z}}{z^k} \1\big(z\in
    \mathscr{B}_{\gamma}\big),
\]
we can rewrite
\begin{multline*}
    \frac{1}{K^k} \E_{Z_0}\bigg[ 
        \frac{\1(Z^K_{Kt} \in \mathscr{B}_\gamma)}{\big(Z^K_{Kt}\big)^k} 
        \sum_{\substack{\bm v : v_1<\cdots<v_{k} \\ \bm v \in{\cal N}_{Kt}}} 
        \sum_{\sigma\in\mathfrak{S}_k} \varphi \circ \sigma(\Pi^{{\bm v}}/K)
    \bigg] \\
     =\ \frac{(Z_0)_k}{K^k} \e^{-\beta Z_0^K}
    \mathbf{M}^{k,Kt}_{Z_0}\Big( \frac{1}{k!} \sum_{\sigma \in
    \mathfrak{S}_K} \varphi \circ \sigma,\, \psi_\gamma \Big).
\end{multline*}
By Proposition~\ref{prop:cv_moments_planaires} and the steps above, we finally derive
\[
    \lim_{K \to \infty} 
    \E_{Z_0}\Big[\varphi(\Pi^{k,K}) \, \big\vert\,Z_{Kt}\geqslant k\Big]
   \ =\ \mathbf{\bar{M}}^{k,t}\Big( \frac{1}{k!} \sum_{\sigma \in
    \mathfrak{S}_k} \varphi \circ \sigma \Big),
\]
and the result follows from Lemma~\ref{lem:felix}.

\end{proof}

\phantomsection
\addcontentsline{toc}{section}{References}

% \bibliographystyle{apalike-doi}
% \bibliography{sample.bib}
\end{document}